\newtheorem{theorem}{Theorem}
\newtheorem{lemma}[theorem]{Lemma}
\newtheorem{corollary}[theorem]{Corollary}
\newtheorem{problem}[theorem]{Problem}
\theoremstyle{definition}
\newtheorem{definition}[theorem]{Definition}
\theoremstyle{remark}
\newtheorem{remark}[theorem]{\bf Remark}
\numberwithin{equation}{section}
\numberwithin{theorem}{section}
\newcommand{\intav}[1]{\mathchoice {\mathop{\vrule width 6pt height 3 pt depth  -2.5pt
\kern -8pt \intop}\nolimits_{\kern -6pt#1}} {\mathop{\vrule width
5pt height 3  pt depth -2.6pt \kern -6pt \intop}\nolimits_{#1}}
{\mathop{\vrule width 5pt height 3 pt depth -2.6pt \kern -6pt
\intop}\nolimits_{#1}} {\mathop{\vrule width 5pt height 3 pt depth
-2.6pt \kern -6pt \intop}\nolimits_{#1}}}
\newcommand{\intavl}[1]{\mathchoice {\mathop{\vrule width 6pt height 3 pt depth  -2.5pt
\kern -8pt \intop}\limits_{\kern -6pt#1}} {\mathop{\vrule width 5pt
height 3  pt depth -2.6pt \kern -6pt \intop}\nolimits_{#1}}
{\mathop{\vrule width 5pt height 3 pt depth -2.6pt \kern -6pt
\intop}\nolimits_{#1}} {\mathop{\vrule width 5pt height 3 pt depth
-2.6pt \kern -6pt \intop}\nolimits_{#1}}}
\newcommand{\R}{\mathbb{R}}
\renewcommand{\P}[1]{{\mathbb{P}}\left[{#1}\right]}
\newcommand{\PP}[2]{{\mathbb{P}}\left[{#1}|{#2}\right]}
\newcommand{\E}[1]{{\mathbb{E}}\left[{#1}\right]}
\newcommand{\EE}[2]{{\mathbb{E}}\left[{#1}|{#2}\right]}
\begin{document}

\title[A generalized P\'{o}lya's Urn with graph based interactions]{A generalized P\'{o}lya's Urn with\\ graph based interactions}

\author[Michel Bena\"im]{Michel Bena\"im}
\address{Institut de Math\'ematiques, Universit\'e de Neuch\^atel, 11 rue \'Emile Argand, 2000 Neuch\^atel, Suisse.}
\email{michel.benaim@unine.ch}
\author[Itai Benjamini]{Itai Benjamini}
\author[Jun Chen]{Jun Chen}
\address{Weizmann Institute of Science, Faculty of Mathematics and Computer Science, POB 26, 76100, Rehovot, Israel.}
\email{itai.benjamini, jun.chen@weizmann.ac.il}
\author[Yuri Lima]{Yuri Lima}
\address{Department of Mathematics, University of Maryland, College Park, MD 20742, USA.}
\email{yurilima@gmail.com}

\subjclass[2010]{Primary: 60K35. Secondary: 37C10.}

\date{\today}

\keywords{Gradient-like system, P\'{o}lya's urn, reinforcement, stochastic approximation algorithms, unstable equilibria}

\begin{abstract}
Given a finite connected graph $G$, place a bin at each vertex. Two bins are called a pair if they
share an edge of $G$. At discrete times, a ball is added to each pair of bins.
In a pair of bins, one of the bins gets the ball with probability proportional to its current
number of balls raised by some fixed power $\alpha>0$.
We characterize the limiting behavior of the proportion of balls in the bins.

The proof uses a dynamical approach to relate the proportion of balls to
a vector field. Our main result is that the limit set of the proportion of balls is
contained in the equilibria set of the vector field. We also prove that if $\alpha<1$
then there is a single point $v=v(G,\alpha)$ with non-zero entries such that the
proportion converges to $v$ almost surely.

A special case is when $G$ is regular and $\alpha\le 1$. We show e.g. that if $G$ is non-bipartite
then the proportion of balls in the bins converges to the uniform measure almost surely.
\end{abstract}

\maketitle

\tableofcontents

\section{Introduction and statement of results}

Let $G=(V,E)$ be a finite connected graph with $V=[m]=\{1,\ldots,m\}$ and $|E|=N$, and assume
that on each vertex $i$ there is a bin initially with $B_i(0)\ge 1$ balls. For a fixed parameter $\alpha>0$,
consider a random process of adding $N$ balls to these bins at each step, according to the following
law: if the numbers of balls after step $n-1$ are $B_1(n-1),\ldots,B_m(n-1)$, step $n$ consists
of adding, to each edge $\{i,j\}\in E$, one ball either to $i$ or to $j$, and the probability that the ball
is added to $i$ is
\begin{align}\label{transition probability}
\P{i\text{ is chosen among }\{i,j\}\text{ at step }n}=\dfrac{B_i(n-1)^\alpha}{B_i(n-1)^\alpha+B_j(n-1)^\alpha}\,\cdot
\end{align}

In this paper, we study the limiting behavior, as the number of steps grows, of the proportion of balls in the
bins of $G$. More specifically, let $N_0=\sum_{i=1}^m B_i(0)$ denote the initial total number of balls, let
\begin{align}\label{definition model}
x_i(n)=\dfrac{B_i(n)}{N_0+nN}\,,\ \ \ i\in[m],
\end{align}
be the proportion of balls at vertex $i$ after step $n$, and let $x(n)=(x_1(n),\ldots,x_m(n))$.
Call the point $(1/m,\ldots,1/m)$ the {\it uniform measure}.
The first result classifies the limiting behavior of $x(n)$ when $G$ is regular and $\alpha=1$.

\begin{theorem} \label{main thm alpha=1}
Let $G$ be a finite, regular, connected graph, and let $\alpha=1$.
\begin{enumerate}[(a)]
\item If $G$ is non-bipartite, then $x(n)$ converges to the uniform measure almost surely.
\item If $G$ is bipartite, then $x(n)$ converges to $\Omega$ almost surely.
\end{enumerate}
\end{theorem}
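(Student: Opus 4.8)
The plan is to feed the problem into the stochastic-approximation framework set up in the paper and to read everything off the structure of the mean vector field at $\alpha=1$. Plugging $\alpha=1$ into \eqref{transition probability}, the normalizing factor cancels in each ratio, so the conditional increments of $x(n)$ are governed by the field $F_i(x)=\sum_{j\sim i}\frac{x_i}{x_i+x_j}-Nx_i$ on the simplex $\Delta=\{x\ge0:\sum_k x_k=1\}$. By the main theorem quoted above, almost surely the limit set $L$ of $(x(n))_n$ is a connected subset of the equilibria set $\Lambda=\{x\in\Delta:F(x)=0\}$. Thus the whole problem reduces to (i) computing $\Lambda$ and (ii) deciding which equilibria the process can actually approach. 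Throughout I write $V=A\sqcup B$ for the bipartition in the bipartite case.

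The key observation I would isolate first is that $F$ is a replicator (Shahshahani-gradient) field for the potential $V(x)=\sum_{\{i,j\}\in E}\log(x_i+x_j)$. Indeed $\partial_i V=\sum_{j\sim i}(x_i+x_j)^{-1}$, a direct edge count gives $\sum_i x_i\partial_iV=N$ on $\Delta$, and hence $F_i(x)=x_i(\partial_iV-N)=x_i(\partial_iV-\sum_k x_k\partial_kV)$. Two consequences follow. First, $V$ is a strict Lyapunov function: $\dot V=\sum_i x_i(\partial_iV)^2-N^2\ge0$ by Cauchy--Schwarz, with equality only on $\Lambda$, so the flow is gradient-like and the interior equilibria are exactly the critical points of $V|_\Delta$. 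Second, $V$ is concave (each $\log(x_i+x_j)$ is), and its Hessian is degenerate on the tangent space $\{\sum v_k=0\}$ precisely along directions $v$ with $v_i+v_j=0$ for every edge. On a connected graph such a nonzero $v$ exists iff $G$ is bipartite, in which case $v=\mathbf 1_A-\mathbf 1_B$, which lies in the tangent space because regularity forces $|A|=|B|$. Therefore, when $G$ is non-bipartite $V$ is strictly concave on $\Delta$ and its unique critical point is the uniform measure $u$; when $G$ is bipartite the set of maximizers is the segment $\Omega=\{x:\ x\equiv s\text{ on }A,\ x\equiv t\text{ on }B,\ s+t=2/m\}$, every point of which is an equilibrium.

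To finish I would rule out every remaining equilibrium by linear instability. For a boundary equilibrium $p$ with support $S$ one has $\partial_iV(p)=N$ for $i\in S$, and since the Jacobian of $F$ at $p$ is block triangular its transverse eigenvalues are exactly $\partial_iV(p)-N$ for $i\notin S$. If all of these were $\le0$, then computing the one-sided directional derivatives of $V$ shows $p$ would be a local---hence, by concavity, global---maximum of $V$, i.e. $p=u$ (non-bipartite) or $p\in\Omega$ (bipartite). Consequently any equilibrium outside the target set has a strictly positive transverse eigenvalue and is linearly unstable; the paper's non-convergence-to-unstable-equilibria result then removes it from $L$ almost surely. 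Since $L$ is connected this yields $L=\{u\}$ in the non-bipartite case and $L\subseteq\Omega$ in the bipartite case, which is the assertion of the theorem.

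The main obstacle I anticipate is twofold and concentrated in the bipartite case. On the one hand, applying the non-convergence-to-unstable-equilibria theorem requires checking both the smoothness of $F$ near the equilibrium (which fails only at configurations containing a fully empty edge, needing separate treatment) and the non-degeneracy of the noise in the unstable coordinate $e_i$, where the arrival probability $x_i/(x_i+x_k)$ degenerates as $x_i\to0$. On the other hand, along $\Omega$ the field vanishes identically, so the flow gives no contraction there and the argument above delivers only $\dist{x(n)}{\Omega}\to0$. To upgrade this to almost sure convergence to a single random point of $\Omega$---the behaviour already visible for $G=K_2$, the classical P\'olya urn---I would project onto the neutral direction $w=\mathbf 1_A-\mathbf 1_B$ and show that $\langle x(n),w\rangle$ is an $L^2$-bounded near-martingale, whose drift vanishes on $\Omega$ and is $O(\dist{x(n)}{\Omega})$ off it while the martingale part has conditional variances of order $n^{-2}$. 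Making the residual-drift contribution summable is the delicate step on which the sharp form of part (b) rests.
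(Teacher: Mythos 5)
Your proposal is correct and follows essentially the same route as the paper: the same stochastic-approximation reduction, the same Lyapunov potential $\sum_{\{i,j\}\in E}\log(v_i+v_j)$ with the replicator structure $F_i=v_i\,\partial L/\partial v_i$, and the same appeal to the non-convergence-to-unstable-equilibria lemma, the only real variation being that you detect the positive transverse derivative at a non-target equilibrium via the concavity/global-maximum characterization where the paper uses the arithmetic--harmonic mean inequality. Note also that part (b) asserts only $\dist{x(n)}{\Omega}\to 0$, so the single-point convergence you flag as the delicate final step is not required for the statement (the paper explicitly leaves it as an open problem).
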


Above, $\Omega$ is a subset of the $(m-1)$-dimensional closed simplex defined as follows:
if $V=A\cup B$ is the bipartition of $G$, then
\begin{align}\label{definition Omega}
\Omega=\{(x_1,\ldots,x_m):\exists\,p,q\ge 0,p+q=2/m,\text{ s.t. }x_i=p\text{ on }A,x_i=q\text{ on }B\}.
\end{align}

Theorem~\ref{main thm alpha=1} includes the case of any finite complete graph. A complete graph with at
least three vertices is non-bipartite, thus the proportion of balls in the bins converges to
the uniform measure almost surely.

Theorem~\ref{main thm alpha=1} also includes the case of cycles: if the length is odd, then the
proportion converges almost surely to the uniform measure; if the length is even, then
the random process' limit set is contained in $\Omega$ almost surely.

Theorems~\ref{main thm alpha=1} is consequence, after finer
analysis, of a general result for any $G$ and any $\alpha>0$. We show that the random process
is a stochastic approximation algorithm, thus it is related to a vector field in the closed simplex, and
the limit set of the random process is contained in the equilibria set
of the vector field (see Theorem~\ref{theorem benaim}). Let $\Lambda$ denote such an equilibria set.

\begin{theorem}\label{main thm general graphs}
Let $G$ be a finite, connected graph, and let $\alpha>0$.
Then the limit set of $x(n)$ is contained in $\Lambda$ almost surely.
\end{theorem}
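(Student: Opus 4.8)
The plan is to exhibit $x(n)$ as a stochastic approximation algorithm, recognise its mean field as a gradient for a suitable metric, and then invoke Theorem~\ref{theorem benaim}. \emph{First, the recursion.} Put $s_n=N_0+nN$, let $\xi_{ij}(n)\in\{0,1\}$ indicate that $i$ receives the ball on edge $\{i,j\}$ at step $n$, and set $\Delta_i(n)=\sum_{j\sim i}\xi_{ij}(n)$, so $B_i(n)=B_i(n-1)+\Delta_i(n)$. Dividing by $s_n$ and using $B_i(n-1)=s_{n-1}x_i(n-1)$ gives
\begin{equation*}
x_i(n)-x_i(n-1)=\gamma_n\bigl(\Delta_i(n)-N x_i(n-1)\bigr),\qquad \gamma_n=\tfrac{1}{s_n}.
\end{equation*}
Since \eqref{transition probability} is scale invariant, $B_i^\alpha/(B_i^\alpha+B_j^\alpha)=x_i^\alpha/(x_i^\alpha+x_j^\alpha)$, so $\E{\Delta_i(n)\mid\mathcal F_{n-1}}$ depends only on $x(n-1)$, yielding the Robbins--Monro form
\begin{equation*}
x(n)=x(n-1)+\gamma_n\bigl(F(x(n-1))+U_n\bigr),\qquad F_i(x)=\sum_{j\sim i}\frac{x_i^\alpha}{x_i^\alpha+x_j^\alpha}-N x_i,
\end{equation*}
with $U_n=\Delta(n)-\E{\Delta(n)\mid\mathcal F_{n-1}}$. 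Pairing the two half-terms of each edge gives $\sum_i F_i\equiv 0$, so $F$ is tangent to the simplex.

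\emph{Next, the stochastic approximation hypotheses.} The gains obey $\gamma_n>0$, $\sum_n\gamma_n=\infty$ and $\sum_n\gamma_n^2<\infty$; the noise $U_n$ is a martingale difference with $\|U_n\|$ uniformly bounded by $\max_i\deg(i)$. On the open simplex $F$ is smooth, and it extends continuously to every boundary point except those where two adjacent coordinates vanish simultaneously. Granting this regularity, Theorem~\ref{theorem benaim} gives that almost surely the limit set of $x(n)$ is an internally chain transitive set for the flow $\Phi$ of $F$.

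\emph{The key point is the gradient structure.} I would guess, then verify, that $F$ is the gradient of an explicit potential for the Shahshahani metric $\langle u,v\rangle_x=\sum_i u_iv_i/x_i$. Writing $R_i(x)=\sum_{j\sim i}x_i^\alpha/(x_i^\alpha+x_j^\alpha)$ and
\begin{equation*}
\mathcal L(x)=\frac1\alpha\sum_{\{i,j\}\in E}\log\bigl(x_i^\alpha+x_j^\alpha\bigr),
\end{equation*}
a one-line computation gives $x_i\,\partial_i\mathcal L=R_i$ and $\sum_k x_k\,\partial_k\mathcal L=\sum_k R_k=N$, so the Shahshahani gradient is
\begin{equation*}
\bigl(\operatorname{grad}\mathcal L\bigr)_i=x_i\Bigl(\partial_i\mathcal L-\sum_k x_k\,\partial_k\mathcal L\Bigr)=R_i-N x_i=F_i.
\end{equation*}
Hence along the flow $\tfrac{d}{dt}\mathcal L(\Phi_t(x))=\|\operatorname{grad}\mathcal L\|_x^2=\sum_i F_i(x)^2/x_i\ge 0$, with equality exactly when $F(x)=0$. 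Thus $-\mathcal L$ is a strict Lyapunov function whose critical set is precisely $\Lambda$, the flow $\Phi$ is gradient-like, and its internally chain transitive sets consist of equilibria; combined with the previous paragraph this places the limit set of $x(n)$ inside $\Lambda$ almost surely.

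\emph{The main obstacle is the boundary.} Both the metric $\langle\cdot,\cdot\rangle_x$ and $\mathcal L$ degenerate where coordinates vanish ($\mathcal L\to-\infty$ once an edge lies entirely in the zero set), and $F$ itself is discontinuous on that locus, so the clean gradient picture is valid only on the open simplex. I would therefore either show that the relevant chain transitive sets avoid $\{x:\exists\,\{i,j\}\in E,\ x_i=x_j=0\}$, or analyse the degenerate faces by a separate limiting argument, and I would verify the mild nondegeneracy (the value set $\mathcal L(\Lambda)$ has empty interior) needed to pass from ``strict Lyapunov function'' to ``chain transitive sets are equilibria.''
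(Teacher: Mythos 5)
Your route is in substance the paper's route: the recursion you write down is exactly the stochastic approximation form derived in Section~\ref{section stochastic approximation algorithm}, and your potential $\mathcal L$ is, up to the additive term $-\sum_i x_i$ (constant on the simplex) and a factor $1/N$, the function $L$ of \eqref{definition L}. Your Shahshahani-gradient identity $F_i=x_i\bigl(\partial_i\mathcal L-\sum_k x_k\partial_k\mathcal L\bigr)$ is the paper's identity $dv_i/dt=v_i\,\partial L/\partial v_i$ in different clothing, and the resulting computation $\frac{d}{dt}\mathcal L=\sum_i F_i^2/x_i\ge 0$ is Lemma~\ref{lemma lyapunov function}. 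So the core is correct. However, the two items you defer at the end are exactly the places where the proof is not yet a proof, and one of them needs an input you do not supply.

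The boundary degeneracy is resolved in the paper not by analysing chain transitive sets near the bad locus, but by removing that locus from the domain once and for all. Fix $c<1/N$ and let $\Delta$ be the part of the simplex where $x_i+x_j\ge c$ for every edge $\{i,j\}$. The random process never leaves such a set: each edge deposits exactly one ball per step at one of its two endpoints, so $B_i(n)+B_j(n)\ge B_i(0)+B_j(0)+n\ge 2+n$, whence $x_i(n)+x_j(n)\ge (2+n)/(N_0+nN)\ge\min(2/N_0,1/N)$ for all $n$ and all $\{i,j\}\in E$. Moreover $\Delta$ is positively invariant for the ODE, since $\frac{d}{dt}(v_i+v_j)\ge-(v_i+v_j)+1/N>0$ on $\{v_i+v_j=c\}$ (Lemma~\ref{compactdomain}). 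On $\Delta$ the vector field is continuous, $\mathcal L$ is finite and continuous, and the Lyapunov inequality is unproblematic, so Theorem~\ref{theorem benaim} applies to the flow on this compact set. Without this observation (or an equivalent one) your invocation of the limit set theorem is not justified: you would be applying it to a flow defined only off a set that the boundary faces of the simplex actually meet, and your first suggested fix (showing the chain transitive sets avoid the bad set) is harder than the direct combinatorial bound above.

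The second deferred item, that $\mathcal L(\Lambda)$ has empty interior, is a genuine hypothesis of Theorem~\ref{theorem benaim} and must be checked rather than labelled mild. The paper does it by writing $\Lambda=\bigcup_{S\subset[m]}\Lambda_S$, where $\Lambda_S$ is the set of points of the face $\Delta_S$ at which $\partial L/\partial v_i=0$ for all $i\in S$ (Lemma~\ref{characterization of Gamma}); each $\Lambda_S$ lies in the critical set of the smooth function $L|_{\Delta_S}$, so $L(\Lambda_S)$ is Lebesgue-null by Sard's theorem, and a finite union of null sets has empty interior. This step is short, but it is not optional, and you should include it.
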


Call $G$ {\it balanced bipartite} if there is a bipartition $V=A\cup B$ with $\#A=\#B$.

\begin{corollary}\label{corollary nonbipartite}
Let $G$ be a finite, connected, not balanced bipartite graph, and let $\alpha=1$.
Then $\Lambda$ is finite and $x(n)$ converges to an element of $\Lambda$ almost surely.
\end{corollary}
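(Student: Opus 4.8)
The plan is to exploit a structural feature available only at $\alpha=1$: the governing vector field is, up to the factor $x_i$, the gradient of a concave potential, so its equilibria are constrained critical points of a concave function and strict concavity will force finiteness. Recall that for $\alpha=1$ the vector field has coordinates
\[
F_i(x)=\sum_{j\sim i}\frac{x_i}{x_i+x_j}-Nx_i,\qquad x\in\Delta:=\Bigl\{x\ge 0:\textstyle\sum_i x_i=1\Bigr\},
\]
and set $\Phi(x)=\sum_{\{i,j\}\in E}\log(x_i+x_j)$. Since $\partial_{x_i}\Phi=\sum_{j\sim i}(x_i+x_j)^{-1}$, one has the identity $F_i=x_i\bigl(\partial_{x_i}\Phi-N\bigr)$. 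Hence $x\in\Lambda$ precisely when, for every $i$, either $x_i=0$ or $\partial_{x_i}\Phi(x)=N$. I would first record that the field is defined only where no edge has both endpoints at $0$: a support $S=\{i:x_i>0\}$ carrying an equilibrium must therefore be a vertex cover, i.e.\ $V\setminus S$ is independent; points violating this lie outside the domain and are excluded from $\Lambda$.

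First I would treat the interior. There the equilibrium condition is $\nabla\Phi=N\mathbf 1$, i.e.\ $x$ is a critical point of $\Phi$ on $\Delta$ (the multiplier is automatically $N$, since $\sum_i x_i\partial_{x_i}\Phi=\sum_{\{i,j\}\in E}1=N$). Now $\Phi$ is concave, being a sum of logarithms of linear forms, with
\[
\mathrm{Hess}\,\Phi=-\sum_{\{i,j\}\in E}\frac{1}{(x_i+x_j)^2}(e_i+e_j)(e_i+e_j)^\top ,
\]
so a tangent direction $v$ (with $\sum_i v_i=0$) is degenerate iff $v_i+v_j=0$ for every edge. On a connected graph such a $v$ must alternate in sign, which is possible with $v\ne 0$ only if $G$ is bipartite, say $V=A\cup B$, whence $v\equiv c$ on $A$ and $v\equiv -c$ on $B$; the constraint $\sum_i v_i=c(\#A-\#B)=0$ then forces $v=0$ unless $\#A=\#B$. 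Thus, when $G$ is not balanced bipartite, $\Phi$ is strictly concave on $\Delta$ and has at most one interior critical point, so $\Lambda$ meets the open simplex in at most one point.

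Next I would run the same analysis face by face. Fix a proper support $S$ (so $S\ne V$) with $V\setminus S$ independent; the equilibria with this support are exactly the critical points of $\Phi$ restricted to the relatively open face $\Delta_S=\{x\in\Delta:\mathrm{supp}(x)=S\}$, again with multiplier $N$. For a direction $v$ supported on $S$, the degeneracy condition now reads $v_i+v_j=0$ on edges inside $S$ together with $v_i=0$ at every $i\in S$ having a neighbour in $V\setminus S$. Because $G$ is connected and $V\setminus S\ne\emptyset$, every connected component of $G[S]$ contains such a boundary vertex, and the relation $v_j=-v_i$ along edges propagates the value $0$ through the whole component; hence $v\equiv 0$. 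So $\Phi|_{\Delta_S}$ is strictly concave and contributes at most one equilibrium. Since there are only finitely many supports $S$, the set $\Lambda$ is finite.

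Finally, Theorem~\ref{main thm general graphs} gives that the limit set $L$ of $x(n)$ is almost surely contained in $\Lambda$. Since $\|x(n)-x(n-1)\|=O(1/n)\to 0$ and $\Delta$ is compact, $L$ is connected; a connected subset of the finite set $\Lambda$ is a single point, so $x(n)$ converges almost surely to an element of $\Lambda$. The delicate step, and the one I would be most careful about, is the boundary bookkeeping: checking that only vertex‑cover supports can carry equilibria and that connectivity of $G$ kills every degenerate direction on each proper face, so that the balanced‑bipartite obstruction is confined to the interior and cannot reappear on lower‑dimensional strata.
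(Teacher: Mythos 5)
Your proof is correct and follows essentially the same route as the paper: you show the Lyapunov potential is strictly concave on each face (using the non-balanced-bipartite hypothesis to kill the alternating degenerate direction in the interior, and connectivity plus boundary vertices on proper faces), conclude that each face carries at most one equilibrium so $\Lambda$ is finite, and then use connectedness of the limit set together with Theorem~\ref{main thm general graphs}. The only cosmetic difference is that you detect strict concavity via the kernel of the Hessian of $\sum_{\{i,j\}\in E}\log(v_i+v_j)$, whereas the paper analyzes the equality case $u_i+u_j=v_i+v_j$ in the concavity inequality directly; the two arguments are equivalent.
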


Theorem \ref{main thm general graphs} also allows to characterize the
limiting behavior of the random process when $\alpha<1$ and $G$ is any graph.

\begin{theorem}\label{main thm alpha<1}
Let $G$ be a finite, connected graph, and let $\alpha<1$. Then there is
$v=v(G,\alpha)$ with non-zero entries such that $x(n)$ converges to $v$ almost surely.
\end{theorem}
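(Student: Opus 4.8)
The plan is to use the stochastic approximation already set up in the paper: the process $x(n)$ is an algorithm whose associated mean vector field on the closed simplex $\Delta=\{x:x_i\ge0,\sum_i x_i=1\}$ is
\begin{equation*}
F_i(x)=\sum_{j\sim i}\frac{x_i^\alpha}{x_i^\alpha+x_j^\alpha}-N x_i,\qquad i\in[m],
\end{equation*}
which is tangent to $\Delta$ since $\sum_i F_i\equiv0$. By Theorem~\ref{main thm general graphs} the limit set $L$ of $x(n)$ is almost surely contained in the equilibria set $\Lambda=\{F=0\}$. So it suffices to prove three things: that $F$ admits a strict Lyapunov function making the flow gradient-like; that for $\alpha<1$ this function is strictly concave, forcing a \emph{unique} interior equilibrium $v$ and no interior equilibrium elsewhere; and that the process cannot converge to $\partial\Delta$. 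Granting these, $L$ is an internally chain transitive set of a gradient-like flow whose only relevant equilibrium is $v$, whence $L=\{v\}$ and $x(n)\to v$ almost surely.

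For the Lyapunov function I would take
\begin{equation*}
V(x)=\frac{1}{\alpha}\sum_{\{i,j\}\in E}\log\bigl(x_i^\alpha+x_j^\alpha\bigr),
\end{equation*}
so that $\partial_i V=\sum_{j\sim i}x_i^{\alpha-1}/(x_i^\alpha+x_j^\alpha)=F_i/x_i+N$. Because $F$ is tangent to $\Delta$, the constant $N$ drops out and one gets
\begin{equation*}
\langle\nabla V(x),F(x)\rangle=\sum_{i}\frac{F_i(x)^2}{x_i}\ge0,
\end{equation*}
with equality only at zeros of $F$; thus $V$ strictly increases along nonstationary orbits, i.e.\ $V$ is a Shahshahani-type gradient potential and $\dot x=F(x)$ is gradient-like. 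The same computation shows the constrained critical points of $V$ on $\Delta$ coincide exactly with $\Lambda$.

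The role of $\alpha<1$ appears in the concavity. A direct Hessian computation of a single edge term gives, writing $a=x_i,\,b=x_j$,
\begin{equation*}
\det\mathrm{Hess}\,\log(a^\alpha+b^\alpha)=\frac{\alpha^{2}(1-\alpha)\,(ab)^{\alpha-2}}{(a^\alpha+b^\alpha)^{2}}>0,
\end{equation*}
with negative trace, so each edge term is strictly concave in its two coordinates for $0<\alpha<1$; since $G$ is connected every vertex lies on an edge, and the only tangent vector annihilated by every edge Hessian is $0$, so $V$ is strictly concave on $\Delta$. Hence $V$ has a unique maximizer $v$, the unique constrained critical point, i.e.\ $\Lambda\cap\mathrm{int}\,\Delta=\{v\}$. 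Moreover $\partial_i V\to+\infty$ as $x_i\to0$ (here $\alpha<1$ is again essential, through $x_i^{\alpha-1}\to\infty$), so $v$ lies in the interior and has all entries non-zero.

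The main obstacle is the last point: ruling out that $x(n)$ converges to $\partial\Delta$, where the drift degenerates. The key estimate is that for $\alpha<1$ the field points strictly inward near the boundary, $F_i(x)\gtrsim x_i^\alpha>N x_i$ once $x_i$ is small, since the reinforcement term $x_i^\alpha$ dominates the dilution term $N x_i$. I would promote this into an almost-sure statement by the nonconvergence-to-unstable-equilibria theory underlying Theorem~\ref{theorem benaim} (any boundary equilibrium is linearly repelling in the inward direction, hence a trap avoided by the algorithm), or, more self-containedly, by a supermartingale argument on a boundary-penalizing functional such as $\sum_i x_i(n)^{1-\alpha}$ or $\min_i x_i(n)$, showing $\liminf_n\min_i x_i(n)>0$ almost surely. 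Combining this with the gradient-like structure and the uniqueness of the interior equilibrium forces $L=\{v\}$, completing the proof with $v=v(G,\alpha)$. The delicate part throughout is handling the degenerate, non-Lipschitz behaviour of $F$ at the boundary when transferring the deterministic attraction picture to the stochastic process.
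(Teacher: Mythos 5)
Your deterministic analysis tracks the paper's argument closely and is correct: your $V$ is, up to the affine term $-\sum_i x_i$ (constant on the simplex) and a factor $1/N$, exactly the Lyapunov function $L$ of (\ref{definition L}); your identity $\langle\nabla V,F\rangle=\sum_i F_i^2/x_i$ is the paper's computation $\frac{d}{dt}(L\circ v)=\sum_i v_i(\partial L/\partial v_i)^2$; and your explicit Hessian determinant for a single edge term is a correct (and somewhat more quantitative) substitute for the paper's argument that $\log(v_i^\alpha+v_j^\alpha)$ is strictly concave as a composition of an increasing concave function with a strictly concave one. The connectivity argument for strict concavity of the sum, the resulting uniqueness of the critical point on each face, and the blow-up $\partial_iV\to+\infty$ forcing the maximizer into the interior all match Section 7 of the paper.

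The gap is precisely where you flag it, and it is not a cosmetic one: you never actually rule out convergence to the boundary equilibria. Your first suggested route, the ``nonconvergence-to-unstable-equilibria theory underlying Theorem~\ref{theorem benaim},'' does not apply off the shelf here: those results (Pemantle, Brandi\`ere--Duflo, Bena\"im) require the vector field to be $C^1$ (indeed $C^2$ in some versions) near the equilibrium with a genuine hyperbolic linearization, whereas for $\alpha<1$ the field has an $x_i^{\alpha-1}$ singularity at $x_i=0$, so $JF$ is not even defined at boundary equilibria --- the paper says this explicitly just before the proof of Lemma~\ref{lemma zero probability unstable}. Your second route (a supermartingale bound on $\sum_i x_i(n)^{1-\alpha}$ or $\min_i x_i(n)$ giving $\liminf_n\min_i x_i(n)>0$ a.s.) would suffice, but it is only named, not executed, and it is exactly the nontrivial probabilistic content of the theorem. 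The paper fills this hole with Lemma~\ref{lemma zero probability unstable}: a bespoke argument (adapted from Pemantle's Theorem 1.3 on vertex-reinforced random walks) that first shows $B_1(n)\to\infty$ a.s. by Borel--Cantelli, then uses a coupling with independent Bernoulli variables and a Chernoff bound to show that, on the event that $x(n)$ stays near a boundary equilibrium with $\partial L/\partial v_1>0$, the conditional expectation of $\log x_1$ increases by a fixed amount over each time window $[n,(1+\delta)n]$, contradicting boundedness. Since $\Lambda$ is finite (one singularity per face) and the limit set is connected, excluding each boundary equilibrium separately via that lemma is what lets the paper conclude $x(n)\to v$ a.s. Without supplying that estimate, your proof establishes only that $x(n)$ converges a.s. to some element of the finite set $\Lambda$, not that the limit is the interior point $v$.
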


The last result deals with star graphs. The {\it star graph} is a tree with $m$ vertices and $m-1$ leaves.

\begin{theorem}\label{main thm star graph}
Let $G$ be a finite star graph with at least three vertices, and let $m$ be the vertex of highest degree.
\begin{enumerate}[(a)]
\item If $\alpha\le 1$, then $x(n)$ converges to
\begin{align*}
\left(\frac{1}{m-1+(m-1)^{\frac{1}{1-\alpha}}}\,,\,\ldots\,,\,\frac{1}{m-1+(m-1)^{\frac{1}{1-\alpha}}}\,,\,
\frac{(m-1)^{\frac{1}{1-\alpha}}}{m-1+(m-1)^{\frac{1}{1-\alpha}}}\right)
\end{align*}
almost surely.
\item If $\alpha>1$, then with positive probability $x(n)$ converges to $(0,\ldots,0,1)$,
and with positive probability $x(n)$ converges to $\left(\frac{1}{m-1},\ldots,\frac{1}{m-1},0\right)$.
\end{enumerate}
\end{theorem}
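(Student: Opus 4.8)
The plan is to work throughout with the vector field $F$ that drives the stochastic approximation of Theorem~\ref{main thm general graphs}. Labelling the leaves $1,\dots,m-1$ and the centre $m$, a direct computation of the expected one-step increment of $x(n)$ produces, on the simplex, the field with leaf-components $F_i(x)=\frac{x_i^\alpha}{x_i^\alpha+x_m^\alpha}-(m-1)x_i$ and centre-component $F_m(x)=\sum_{j=1}^{m-1}\frac{x_m^\alpha}{x_j^\alpha+x_m^\alpha}-(m-1)x_m$; using $\frac{x_i^\alpha}{x_i^\alpha+x_m^\alpha}+\frac{x_m^\alpha}{x_i^\alpha+x_m^\alpha}=1$ one checks $\sum_i F_i\equiv 0$, so $F$ is tangent to the simplex. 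Everything then reduces to locating the equilibria set $\Lambda=\{F=0\}$ and deciding the stability of its points. I would first record the two boundary equilibria present for every $\alpha>0$: the centre-vertex $e=(0,\dots,0,1)$ and the point $w=(\tfrac1{m-1},\dots,\tfrac1{m-1},0)$, both verified by substitution into $F$ (using the convention $0^\alpha=0$).

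For part (a) with $\alpha<1$ the work is essentially algebraic. Theorem~\ref{main thm alpha<1} already supplies almost sure convergence to a single interior point $v$, so it suffices to identify $v$ as the unique equilibrium with all coordinates positive. By the permutation symmetry of the star together with the uniqueness in Theorem~\ref{main thm alpha<1}, this limit is leaf-symmetric, say $x_i=a$ on the leaves and $x_m=b$; the leaf and centre equations then read $\frac{a^\alpha}{a^\alpha+b^\alpha}=(m-1)a$ and $\frac{b^\alpha}{a^\alpha+b^\alpha}=b$, and dividing one by the other gives $(b/a)^{\alpha-1}=1/(m-1)$, i.e. $b/a=(m-1)^{1/(1-\alpha)}$. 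Normalising by $(m-1)a+b=1$ yields exactly the asserted vector. The same scalar analysis (one positive intersection of $t\mapsto t^\alpha/(t^\alpha+b^\alpha)$ with $t\mapsto(m-1)t$ when $\alpha<1$) shows the interior equilibrium is unique, so the $v$ of Theorem~\ref{main thm alpha<1} can be nothing else.

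The case $\alpha=1$ is the delicate one and needs the dynamical input rather than Theorem~\ref{main thm alpha<1}, since the star is not regular. Here the leaf equation forces every nonzero leaf to equal $\tfrac1{m-1}-x_m$, and a short case analysis on the number of nonzero leaves shows the only admissible equilibria are $e$ and $w$, so $\Lambda=\{e,w\}$. I would then linearise: in the centre direction at $w$ one finds $F_m\approx(m-1)(m-2)\,x_m$, so the linearization at $w$ has a positive eigenvalue and $w$ is linearly unstable, whereas a limit set contained in the finite set $\Lambda$ is, being connected and internally chain transitive for the flow, a single point. Invoking the non-convergence-to-unstable-equilibria principle for stochastic approximations — whose hypothesis is non-degeneracy of the noise in the centre direction, which holds because the number of balls reaching the centre has positive conditional variance near $w$ — rules out $\{w\}$, leaving $x(n)\to e$ almost surely; this is the value of the formula in (a) in the limit $\alpha\uparrow 1$, where $(m-1)^{1/(1-\alpha)}\to\infty$. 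I expect this non-convergence verification to be the main obstacle of the whole proof.

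For part (b), $\alpha>1$, I would bypass the equilibrium analysis and argue directly via explicit events. Let $A$ be the event that at every step every edge delivers its ball to the centre; on $A$ one has $B_m(n)=B_m(0)+n(m-1)$ with the leaves frozen, so $x(n)\to e$. The conditional success probability at step $n$ on $A$ is $\prod_{j}\frac{M_{n-1}^\alpha}{B_j(0)^\alpha+M_{n-1}^\alpha}$ with $M_{n-1}=B_m(0)+(n-1)(m-1)$, and one minus this factor is $O(n^{-\alpha})$, which is summable precisely because $\alpha>1$; hence the infinite product is strictly positive and $\mathbb{P}[x(n)\to e]\ge\mathbb{P}[A]>0$. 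Symmetrically, the event that the centre never receives a ball freezes $B_m$ and lets each leaf grow linearly, giving $x(n)\to w$, and the identical summability estimate makes that event have positive probability. These two convergent-infinite-product estimates, each using $\alpha>1$ in an essential way, complete part (b).
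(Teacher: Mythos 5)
Your treatment of the case $\alpha<1$ (invoke Theorem~\ref{main thm alpha<1}, then identify the unique interior equilibrium by leaf symmetry and the two scalar equations) matches the paper's route, and your part (b) is correct and in fact slightly more self-contained than the paper's: you compute the convergent infinite product for the events $\bigcap_{i,n}\{\delta_{i\leftarrow m}(n)=0\}$ and $\bigcap_{i,n}\{\delta_{i\leftarrow m}(n)=1\}$ directly, using summability of $O(n^{-\alpha})$ for $\alpha>1$, where the paper couples to $m-1$ independent two-colour urns and cites Rubin's theorem. Both are fine.

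The genuine gap is in the case $\alpha=1$, at the step where you rule out convergence to $w=\bigl(\tfrac1{m-1},\dots,\tfrac1{m-1},0\bigr)$. You correctly compute $\Lambda=\{e,w\}$ and that $w$ is linearly unstable with unstable direction $x_m$, but the ``non-convergence to unstable equilibria'' principle you invoke requires a \emph{uniform} lower bound on the noise in the unstable direction near the equilibrium (e.g.\ $\mathbb{E}[(u_n\cdot\theta)^+\mid\mathcal F_n]\ge c>0$), and that hypothesis \emph{fails} here: the increment of $x_m$ is driven by the Bernoulli variables $\delta_{m\leftarrow j}(n+1)$ with success probability $x_m(n)/(x_j(n)+x_m(n))$, which tends to $0$ as $x(n)\to w$, so the conditional variance of the centre coordinate's noise vanishes at $w$ rather than being bounded below. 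Your assertion that ``the number of balls reaching the centre has positive conditional variance near $w$'' is true pointwise but degenerates in the limit, which is exactly the regime the standard theorem excludes. This is why the paper proves a bespoke statement, Lemma~\ref{lemma zero probability unstable} (an adaptation of Pemantle's argument for vertex-reinforced random walks): instead of noise non-degeneracy it exploits that the \emph{drift} of $\log x_m$ over the time window $[n,(1+\delta)n]$ is bounded below by a positive constant whenever $\partial L/\partial v_m(w)>0$, established by coupling with independent Bernoullis and a Chernoff bound, and then derives a contradiction with the boundedness of $\log x_m$. To close your argument you would either have to reproduce such a drift estimate for $\log x_m$ or cite a non-convergence result valid at boundary equilibria with degenerate noise; the off-the-shelf interior-noise version does not apply.
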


A {\it motivation} for the model proposed above is: imagine there are 3 companies,
denoted by M, A and G. Each company sells two products.
M sells {\sf OS} and {\sf SE}, A sells {\sf OS} and {\sf SP}, G sells {\sf SE} and {\sf SP}.
Each pair of companies compete
on one product. The companies try to use their global size and reputation to boost sales.
A natural question is which company will sell more products in the long term. In this scenario,
one can easily see that the interacting relation among the three companies forms a triangular
network. On this triangle, a vertex represents a company and an edge represents a product.
Under further simplifications, our model describes in broad strokes the long-term evolution
of such competition.

Another motivation for the model comes from a repeated game in which agents improve their
skill by gaining experience. The interaction network between agents is modeled by a graph.
At each round a pair is competing for a ball. A competitor improves his skill with time,
and the number of balls in his bin represents his skill level.

There are several natural ways to generalize our model to capture more complex interactions,
but here we focus on the simplest setup. The model can be viewed as a class of graph based
evolutionary model, which has been studied in various fields, e.g. biology \cite{ohtsuki2006simple},
economics \cite{jackson2010social}, mathematics \cite{skyrms2000dynamic},
and sociology \cite{santos2006evolutionary}.

The classical {\it P\'olya's urn} contains balls of two colors, and at each step one ball is drawn
randomly, its color is observed, and a ball of the same color is added to the urn.
In our process the interactions occur among pairs of bins, and on each of them the
model evolves similar to the classical P\'{o}lya's urn. We therefore call it a {\it generalized
P\'{o}lya's urn with graph based interactions}. For other variations  on P\'{o}lya's urn,  see e.g.
\cite{chung2003generalizations,launay2011interacting,marsili1998self}.

Let us sketch the ideas used in this article. We show that the proportion of balls in the
bins is a \emph{stochastic approximation algorithm}, then it is related to a vector field in the closed simplex.
In our case the vector field is gradient-like: it has a strict Lyapunov function (see Definition
\ref{definition strict lyapunov function}), whose set of critical values has empty interior.
This is enough to prove Theorem~\ref{main thm general graphs}. Corollary \ref{corollary nonbipartite}
follows from concavity properties of the strict Lyapunov function.

The method described above is usually called a {\it dynamical approach},
because the vector field generates an autonomous ordinary differential equation
(ODE), and dynamical information on the ODE give information on the random process.
From now on, we make no distinction between the vector field and its ODE.

The dynamical approach restricts the possible limits of the random process.
Some equilibria are stable, and some are unstable. In order to exclude convergence to unstable
equilibria, we look at the random process itself. Using probabilistic arguments,
we give checkable conditions to guarantee that the random process has zero probability
to converge to unstable equilibria (see Lemma \ref{lemma zero probability unstable}).
This is enough to prove Theorems \ref{main thm alpha=1}, \ref{main thm alpha<1}
and \ref{main thm star graph}.

This work initiates a program to understand the limiting behavior of $x(n)$ and its
dependence on $G$ and $\alpha$. Several open problems are presented in the last section.

\section{Stochastic approximation algorithms}\label{section stochastic approximation algorithm}

A {\it stochastic approximation algorithm} is a discrete time stochastic process whose general form can be written as
\begin{equation} \label{stochastic approximation}
x(n+1)-x(n)=\gamma_n H(x(n),\xi(n))
\end{equation}
where $H:\R^m\times\R^m\to\R^m$ is a measurable function that characterizes the algorithm, $\{x(n)\}_{n\ge 0}\subset\R^m$
is the sequence of parameters to be recursively updated, $\{\xi(n)\}_{n\ge 0}\subset\R^m$ is a sequence of random inputs
where $H(x(n),\xi(n))$ is observable, and $\{\gamma_n\}_{n\ge 0}$ is a sequence of ``small" nonnegative scalar gains.
Such processes were first introduced in the early 50s on the works of Robbins and Monro~\cite{robbins1951stochastic}
and Kiefer and Wolfowitz~\cite{kiefer1952stochastic}.

Let us show that the random process defined by (\ref{definition model})
is a stochastic approximation algorithm. We start by specifically formulating the model. Write $i\sim j$ whenever
$\{i,j\}\in E$. Let $\mathcal F_n$ denote the sigma-algebra generated by the process up to step $n$,
and let $C_i(n)$ denote the number of balls added to $i$ at step $n$. For each neighboring pair of
vertices $i\sim j$, consider 0-1 valued random variables $\delta_{i\leftarrow j}(n+1),\delta_{j\leftarrow i}(n+1)$
such that $\delta_{i\leftarrow j}(n+1)+\delta_{j\leftarrow i}(n+1)=1$ and
\begin{align}\label{transition probability 2}
\EE{\delta_{i\leftarrow j}(n+1)}{\mathcal F_n}=\dfrac{B_i(n)^\alpha}{B_i(n)^\alpha+B_j(n)^\alpha}
=\dfrac{x_i(n)^\alpha}{x_i(n)^\alpha+x_j(n)^\alpha}\,\cdot
\end{align}
Also, assume that $\delta_{i\leftarrow j}(n+1)$ and $\delta_{i'\leftarrow j'}(n+1)$ are independent whenever
the edges $\{i,j\}$ and $\{i',j'\}$ are distinct. Thus
\begin{align}\label{expression C_i}
C_i(n+1)=\sum_{j\sim i}\delta_{i\leftarrow j}(n+1).
\end{align}

We want to show that $x(n)=(x_1(n),\ldots,x_m(n))$ satisfies a difference equation of the
form (\ref{stochastic approximation}). Observe that
\begin{eqnarray*}
x_i(n+1)-x_i(n)&=&\dfrac{B_i(n)+C_i(n+1)}{N_0+(n+1)N}-\dfrac{B_i(n)}{N_0+nN}\\
               &&\\
               &=&\dfrac{-Nx_i(n)+C_i(n+1)}{N_0+(n+1)N}\\
               &&\\
               &=&\dfrac{1}{\frac{N_0}{N}+(n+1)}\left(-x_i(n)+\frac{1}{N}C_i(n+1)\right)
\end{eqnarray*}
and so $x(n)$ satisfies (\ref{stochastic approximation}) with
\begin{align}\label{definition gamma_n}
\gamma_n=\dfrac{1}{\frac{N_0}{N}+(n+1)}\ \ ,\ \ \ \xi(n)=\frac{1}{N}\left(C_1(n+1),\ldots,C_m(n+1)\right)
\end{align}
and $H:\R^m\times\R^m\rightarrow\R^m$ defined by
\begin{align*}
H(x(n),\xi(n))=-x(n)+\xi(n).
\end{align*}

Conditioning on $\mathcal F_n$, $H$ has a deterministic component $x(n)$ and a random component
$\xi(n)$. Nevertheless, nothing can be said about converging properties of $\xi(n)$. To this
matter, we modify the above equation by decoupling $\xi(n)$ into its mean part and the so
called ``noise'' part, which has zero mean. If one manages to control the total error of the
noise term, the limiting behavior of $x(n)$ can be identified via the limiting behavior of
the new deterministic component.

\section{The dynamical approach}\label{section dynamical approach}

The dynamical approach is a method introduced by Ljung~\cite{ljung1977analysis}
and Kushner and Clark~\cite{kushner1978stochastic} to analyze stochastic
approximation algorithms. Formally, it says that recursive expressions of the form
(\ref{stochastic approximation}) can be analyzed via an autonomous ODE
\begin{equation} \label{vectorODE}
\frac{dx(t)}{dt}=\overline H(x(t))\,,
\end{equation}
where $\overline H(x)=\lim_{n\to\infty}\E{H(x,\xi(n))}$.

In this perspective, our stochastic approximation algorithm can be written as
\begin{align*}
x(n+1)-x(n)=\gamma_n\left\{(-x(n)+\EE{\xi(n)}{\mathcal F_n})+(\xi(n)-\EE{\xi(n)}{\mathcal F_n})\right\}.
\end{align*}
Denote $\xi(n)=(\xi_1(n),\ldots,\xi_m(n))$. Equations (\ref{transition probability 2}),
(\ref{expression C_i}) and (\ref{definition gamma_n}) imply
\begin{align*}
\EE{\xi_i(n)}{\mathcal F_n}=\dfrac{1}{N}\sum_{j\sim i}\EE{\delta_{i\leftarrow j}(n+1)}{\mathcal F_n}
=\dfrac{1}{N}\sum_{j\sim i}\dfrac{x_i(n)^\alpha}{x_i(n)^\alpha+x_j(n)^\alpha}\,\cdot
\end{align*}
Thus, defining $\{u_n\}_{n\ge 0}\subset\R^m$ by
\begin{align}\label{definition u_n}
u_n=\xi(n)-\EE{\xi(n)}{\mathcal F_n}
\end{align}
and $F=(F_1,\ldots,F_m)$ by
\begin{align}\label{definition F}
F_i(x_1,\ldots,x_m)=-x_i+\dfrac{1}{N}\sum_{j\sim i}\dfrac{x_i(n)^\alpha}{x_i(n)^\alpha+x_j(n)^\alpha}\,,
\end{align}
our random process takes the form
\begin{align}\label{definition system}
x(n+1)-x(n)=\gamma_n\left[F(x(n))+u_n\right].
\end{align}

The above expression is a particular case of a class of stochastic approximation algorithms studied
in \cite{benaim1996dynamical}, on which the behavior of the algorithm is related to
a weak notion of recurrence for the ODE: that of {\it chain-recurrence}. Under the assumptions of
Kushner and Clark lemma~\cite{kushner1978stochastic}, it is proved that the accumulation points of
$\{x(n)\}_{n\ge 0}$ are contained in the chain-recurrent set of the ODE.

If, furthermore, the system is gradient-like and the set of critical values of the strict Lyapunov
function has empty interior, then the accumulation points of $\{x(n)\}_{n\ge 0}$ are equilibria
of the ODE. See Theorem~\ref{theorem benaim} below.

In this section we introduce some definitions, state Theorem~\ref{theorem benaim}, and prove that
our model satisfies two of the three conditions of Theorem~\ref{theorem benaim}.

Let $U\subset\R^m$ be a closed set, and let $F:U\rightarrow\R^m$ be a continuous vector field
with unique integral curves.

\begin{definition}[Equilibrium point]
A point $x\in U$ is called an {\it equilibrium} if $F(x)=0$. $x$ is called {\it stable} if
all the eigenvalues of $JF(x)$ have negative real part, and it is called {\it unstable} if
one of the eigenvalues of $JF(x)$ has positive real part.
Let $\Lambda$ denote the set of all equilibrium points. We call it the {\it equilibria set}.
\end{definition}

\begin{definition}[Strict Lyapunov function]\label{definition strict lyapunov function}
A {\it strict Lyapunov function for $F$} is a continuous map $L:U\to\R$ which is strictly
monotone along any integral curve of $F$ outside $\Lambda$. In this case, we call $F$
{\it gradient-like}.
\end{definition}

\subsection{A limit set theorem}

The reason we can characterize the limit set of the random process via the
equilibria set of the vector field is due to results in
\cite{benaim1996dynamical,benaim1999dynamics,benaim2012strongly} which,
to our purposes,  are summarized as follows.

\begin{theorem}\label{theorem benaim}
Let $F:\R^m\to\R^m$ be a continuous gradient-like vector field with unique integral curves,
let $\Lambda$ be its equilibria set, let $L$ be a strict Lyapunov function,
and let $\{x(n)\}_{n\ge 0}$ be a solution to the recursion
\begin{align*}
x(n+1)-x(n)=\gamma_n\left[F(x(n))+u_n\right],
\end{align*}
where $\{\gamma_n\}_{n\ge 0}$ is a decreasing gain sequence\footnote{$\lim_{n\to\infty}\gamma_n=0$ and
$\sum_{n\ge 0}\gamma_n=\infty$.} and $\{u_n\}_{n\ge 0}\subset\R^m$. Assume that
\begin{enumerate}[(i)]
\item $\{x(n)\}_{n\ge 0}$ is bounded,
\item for each $T>0$,
\begin{align*}
\lim_{n\to\infty}\left(\sup_{\{k:0\le\tau_k-\tau_n\le T\}}\left\Vert\sum_{i=n}^{k-1}\gamma_i u_i\right\Vert\right)=0\,,
\end{align*}
where $\tau_n=\sum_{i=0}^{n-1}\gamma_i$, and
\item $L(\Lambda)\subset\mathbb R$ has empty interior.
\end{enumerate}
Then the limit set of $\{x(n)\}_{n\ge 0}$ is a connected subset of $\Lambda$.
\end{theorem}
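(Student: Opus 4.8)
The plan is to recognize this statement as a synthesis of three results from the theory of asymptotic pseudotrajectories developed in \cite{benaim1996dynamical,benaim1999dynamics,benaim2012strongly}, and to assemble them in order rather than reprove them from scratch. Write $\Phi=\{\Phi_t\}$ for the flow generated by the ODE $\dot x=F(x)$; uniqueness of integral curves makes $\Phi$ well defined, and hypothesis (i) confines the analysis to a compact set on which $F$ is bounded and uniformly continuous. Define the continuous-time affine interpolation $\bar x$ of $\{x(n)\}$ by $\bar x(\tau_n)=x(n)$, interpolated linearly on $[\tau_n,\tau_{n+1}]$, where $\tau_n=\sum_{i=0}^{n-1}\gamma_i$.

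First I would show that $\bar x$ is an \emph{asymptotic pseudotrajectory} of $\Phi$, i.e. that for each $T>0$,
\[
\lim_{t\to\infty}\ \sup_{0\le s\le T}\ \bigl\Vert \bar x(t+s)-\Phi_s(\bar x(t))\bigr\Vert=0.
\]
This is exactly the content of the standard criterion in \cite{benaim1999dynamics}: the discrepancy between the interpolated trajectory and the flow is controlled by the size of the gains $\gamma_n$, which tend to $0$, together with the weighted noise increments $\sum_{i=n}^{k-1}\gamma_i u_i$. Hypothesis (ii) is precisely the requirement that these noise sums vanish uniformly on each time window of length $T$, while hypothesis (i) supplies the uniform continuity of $F$ needed to close the Gronwall-type estimate. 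Next I would invoke the general limit-set theorem for bounded asymptotic pseudotrajectories, which states that the limit set $\mathcal{L}$ of $\{x(n)\}$ is a nonempty, compact, connected, invariant set that is moreover \emph{internally chain transitive} for $\Phi$. I would cite this directly; note that connectedness, part of the desired conclusion, is already built into this characterization.

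The final and most delicate step is to use the gradient-like structure to place $\mathcal{L}$ inside $\Lambda$, via the Lyapunov reduction for internally chain transitive sets. The argument runs as follows: since $\mathcal{L}$ is compact and invariant, for each $x\in\mathcal{L}$ the $\omega$-limit set $\omega(x)$ lies in $\mathcal{L}$, is invariant, and $L$ is constant on it, being the limit of the monotone map $t\mapsto L(\Phi_t(x))$; strict monotonicity of $L$ off $\Lambda$ then forces $\omega(x)\subseteq\Lambda$, so $L(\omega(x))\subseteq L(\Lambda)$. Internal chain transitivity upgrades this to $L(\mathcal{L})\subseteq L(\Lambda)$, with $L$ in fact constant on $\mathcal{L}$. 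As $\mathcal{L}$ is connected, $L(\mathcal{L})$ is an interval contained in $L(\Lambda)$, and hypothesis (iii), that $L(\Lambda)$ has empty interior, collapses this interval to a single point. Strict monotonicity of $L$ off $\Lambda$ then leaves no room for a nonequilibrium orbit inside the invariant set $\mathcal{L}$, whence $\mathcal{L}\subseteq\Lambda$.

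I expect the main obstacle to be the chain-transitivity upgrade $L(\mathcal{L})\subseteq L(\Lambda)$: the naive telescoping bound along an $(\epsilon,T)$-chain degrades because the number of links may grow as $\epsilon\to 0$. The clean way around this is the estimate of Benaïm and Hirsch bounding the total increase of $L$ along any $(\epsilon,T)$-chain independently of its length, which is exactly the lemma I would import from \cite{benaim1999dynamics}. Everything else—the pseudotrajectory verification and the passage to internal chain transitivity—is a routine instantiation of the cited framework to the present gains $\gamma_n$ and noise terms $u_n$.
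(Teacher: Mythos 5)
Your proposal is correct and follows the same route as the paper, which simply cites Theorem 1.2 of \cite{benaim1996dynamical} and Proposition 6.4 of \cite{benaim1999dynamics}; your outline (interpolation is an asymptotic pseudotrajectory via (i)--(ii), the limit set is internally chain transitive, and the Lyapunov function together with (iii) collapses it into $\Lambda$) is precisely the content of those cited results. No discrepancy to report.
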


\begin{proof}
See Theorem 1.2 of \cite{benaim1996dynamical} and Proposition 6.4 of \cite{benaim1999dynamics}.
\end{proof}

\subsection{The random process (\ref{definition system}) satisfies (i) and (ii) of Theorem~\ref{theorem benaim}}

Firstly, note that $\{\gamma_n\}_{n\ge 0}$ satisfies
\begin{align*}
\lim_{n\to\infty}\gamma_n=0\ \ \text{ and}\ \ \sum_{n\ge 0}\gamma_n=\infty.
\end{align*}
Of course, $\{x(n)\}_{n\ge 0}$ is bounded. It remains to check condition (ii). For that, let
\begin{align*}
M_n=\sum_{i=0}^n\gamma_i u_i.
\end{align*}
$\{M_n\}_{n\ge 0}$ is a martingale adapted to the filtration $\{\mathcal F_n\}_{n\ge 0}$:
\begin{align*}
\EE{M_{n+1}}{\mathcal F_n}=\sum_{i=0}^n\gamma_i u_i+\EE{\gamma_{n+1}u_{n+1}}{\mathcal F_n}=\sum_{i=0}^n\gamma_i u_i=M_n.
\end{align*}
Furthermore, because for any $n\ge 0$
\begin{align*}
\sum_{i=0}^n\EE{\Vert M_{i+1}-M_i \Vert^2}{\mathcal F_i}\le\sum_{i=0}^n\gamma_{i+1}^2\le\sum_{i\ge 0}\gamma_i^2<\infty\ \text{ a.s.},
\end{align*}
the sequence $\{M_n\}_{n\ge 0}$ converges almost surely to a finite random vector
(see e.g. Theorem 5.4.9 of \cite{durrett2010probability}). In particular,
it is a Cauchy sequence and so condition (ii) holds almost surely.

In order to apply Theorem~\ref{theorem benaim}, we will construct a strict Lyapunov function for
the ODE
\begin{eqnarray}\label{definition ODE}
\left\{
\begin{array}{rcl}
\dfrac{dv_1(t)}{dt}&=&-v_1(t)+\dfrac{1}{N} \displaystyle\sum_{j\sim 1}\frac{v_1(t)^\alpha}{v_1(t)^\alpha+v_j(t)^\alpha}    \\
&\vdots&\\
\dfrac{dv_m(t)}{dt}&=&-v_m(t)+\dfrac{1}{N} \displaystyle\sum_{j\sim m}\frac{v_m(t)^\alpha}{v_m(t)^\alpha+v_j(t)^\alpha}
\end{array}\right.
\end{eqnarray}
that satisfies condition (iii) of Theorem~\ref{theorem benaim}.

Before that, let us specify the domain of the vector field $F$. Fix $c<1/N$,
and let $\Delta$ be the set of $m$-tuples $(x_1,\ldots,x_m)\in\mathbb R^m$ such that:
\begin{enumerate}[(1)]
\item $x_i\ge 0$ and $\sum_{i=1}^m x_i=1$, and
\item $x_i+x_j\ge c$ for all $\{i,j\}\in E$.
\end{enumerate}
Clearly, $F:\Delta\rightarrow T\Delta$ is  Lipschitz. Moreover, we have

\begin{lemma} \label{compactdomain}
$\Delta$ is positively invariant under the ODE (\ref{definition ODE}).
\end{lemma}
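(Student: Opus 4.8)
The plan is to use the standard boundary criterion for positive invariance of a convex set: since $\Delta$ is compact and convex (an intersection of closed half-spaces with the affine hyperplane $\{\sum_i x_i = 1\}$) and $F$ is Lipschitz on $\Delta$, it suffices to verify that at every boundary point the field $F$ lies in the tangent cone, equivalently that each active constraint has nonnegative inner product with $F$. Concretely there are three families of constraints to treat: the defining hyperplane, the nonnegativity faces $\{x_i = 0\}$, and the edge faces $\{x_i + x_j = c\}$ with $\{i,j\} \in E$.

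First I would check that the hyperplane itself is invariant by showing $\sum_{i=1}^m F_i = 0$ on it. The key is the edge-pairing identity: for each edge $\{i,j\}$ the two contributions satisfy
\[
\frac{x_i^\alpha}{x_i^\alpha + x_j^\alpha} + \frac{x_j^\alpha}{x_i^\alpha + x_j^\alpha} = 1,
\]
so the double sum $\sum_i\sum_{j\sim i} \tfrac{x_i^\alpha}{x_i^\alpha + x_j^\alpha}$ reduces, edge by edge, to the number of edges $N$, giving $\sum_i F_i = -1 + \tfrac1N\cdot N = 0$. Hence trajectories remain on the simplex hyperplane. Next, on a nonnegativity face $\{x_i = 0\}$, constraint (2) forces each neighbor $j \sim i$ to satisfy $x_j \ge c > 0$; this is exactly what removes the apparent $0/0$ singularity and makes $F$ well defined there. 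Since $0^\alpha = 0$ for $\alpha > 0$, every summand $\tfrac{x_i^\alpha}{x_i^\alpha + x_j^\alpha}$ vanishes, so $F_i = 0 \ge 0$ and no trajectory can drive $x_i$ below $0$.

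The crux is the edge faces $\{x_i + x_j = c\}$, where I would establish $F_i + F_j > 0$. Isolating the shared edge $\{i,j\}$, its two reciprocal terms again sum to $1$, while every remaining summand in $F_i$ and $F_j$ is nonnegative; hence on this face
\[
F_i + F_j = -(x_i + x_j) + \tfrac1N\bigl[\,1 + (\text{nonnegative terms})\,\bigr] \ge -c + \tfrac1N > 0,
\]
where the final strict inequality is precisely where the hypothesis $c < 1/N$ is used. Thus $\tfrac{d}{dt}(x_i + x_j) > 0$ on this face and the flow is pushed strictly inward.

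Assembling the three computations shows that $F$ lies in the tangent cone at every boundary point of the convex set $\Delta$, and the invariance criterion then gives the claim. The only genuine obstacle is the shared-edge bookkeeping in the last estimate — one must peel off the $\{i,j\}$ term before bounding below — whereas the nonnegativity faces are immediate once one observes that constraint (2) simultaneously dissolves the singularity and kills the relevant term. I expect no difficulty with regularity, since the Lipschitz property of $F$ on $\Delta$ has already been recorded.
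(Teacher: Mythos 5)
Your proof is correct and follows essentially the same route as the paper: the decisive step in both is to isolate the shared-edge term in $F_i+F_j$ and use $c<1/N$ to obtain $\tfrac{d}{dt}(v_i+v_j)\ge -(v_i+v_j)+\tfrac1N>0$ on the faces $\{v_i+v_j=c\}$. You additionally verify invariance of the hyperplane $\sum_i x_i=1$ and of the faces $\{x_i=0\}$ via the tangent-cone criterion, checks the paper leaves implicit; these are correct and make the argument more complete.
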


\begin{proof}
Given $\{i,j\}\in E$,
\begin{eqnarray*}
\dfrac{d}{dt}(v_i+v_j)&=&-v_i+\dfrac{1}{N}\sum_{k\sim i}\dfrac{v_i^\alpha}{v_i^\alpha+v_k^\alpha}
-v_j+\dfrac{1}{N}\sum_{l\sim j}\dfrac{v_j^\alpha}{v_j^\alpha+v_l^\alpha}\\
&\ge&-(v_i+v_j)+\dfrac{1}{N}\left(\dfrac{v_i^\alpha}{v_i^\alpha+v_j^\alpha}+\dfrac{v_j^\alpha}{v_j^\alpha+v_i^\alpha}\right)\\
&=&-(v_i+v_j)+\dfrac{1}{N}\cdot
\end{eqnarray*}
If $v$ belongs to the boundary of $\Delta$, there exists some $\{i,j\}\in E$ such that $v_i+v_j=c$, and then
\begin{align*}
\dfrac{d}{dt}(v_i+v_j)\ge -(v_i+v_j)+\dfrac{1}{N}=-c+\dfrac{1}{N}>0,
\end{align*}
which means that $F$ points inward on the boundary of $\Delta$. This proves that
condition (2) is preserved.
\end{proof}

\section{The general case: proof of Theorem~\ref{main thm general graphs}}\label{section main thm general graphs}

Let $L:\Delta\rightarrow\R$ be given by
\begin{align}\label{definition L}
L(v_1,\ldots,v_m)=-\sum_{i=1}^m v_i+\dfrac{1}{\alpha N}\sum_{\{i,j\}\in E}\log{(v_i^\alpha+v_j^\alpha)}.
\end{align}

\begin{lemma}\label{lemma lyapunov function}
$L$ is a strict Lyapunov function for $F$.
\end{lemma}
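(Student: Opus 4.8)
The plan is to verify directly that $L$ increases along every non-stationary integral curve of $F$ and is stationary exactly on $\Lambda$. First I would compute the gradient of $L$. Differentiating \eqref{definition L}, each edge $\{i,j\}$ contributes only to the $i$- and $j$-components, giving
\[
\frac{\partial L}{\partial v_i}=-1+\frac{1}{N}\sum_{j\sim i}\frac{v_i^{\alpha-1}}{v_i^\alpha+v_j^\alpha}.
\]
The crucial observation is then the algebraic identity obtained by comparing this with \eqref{definition F}: multiplying by $v_i$ and using $v_i\cdot v_i^{\alpha-1}=v_i^\alpha$,
\[
F_i(v)=v_i\,\frac{\partial L}{\partial v_i}(v)\qquad\text{for every }i.
\]
In other words, $F$ is the gradient of $L$ rescaled coordinatewise by the nonnegative weights $v_i$ (a Shahshahani-type gradient structure).

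Given this identity, computing the derivative of $L$ along a solution $v(t)$ of \eqref{definition ODE} is immediate: since $\dot v=F(v)\in T\Delta$,
\[
\frac{d}{dt}L(v(t))=\sum_{i=1}^m\frac{\partial L}{\partial v_i}\,\dot v_i=\sum_{i=1}^m\frac{\partial L}{\partial v_i}F_i=\sum_{i=1}^m v_i\left(\frac{\partial L}{\partial v_i}\right)^2=\sum_{i:\,v_i>0}\frac{F_i(v)^2}{v_i}\ge 0.
\]
This sum of nonnegative terms vanishes if and only if $v_i(\partial L/\partial v_i)^2=0$ for every $i$, which, again by the identity, is exactly the condition $F_i(v)=0$ for all $i$, i.e. $v\in\Lambda$. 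Hence $L$ is strictly increasing along any integral curve meeting $\Delta\setminus\Lambda$, which is the definition of a strict Lyapunov function.

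To make this rigorous I would first record that $L$ is continuous on $\Delta$: since $v_i+v_j\ge c$ forces $\max\{v_i,v_j\}\ge c/2$, we have $v_i^\alpha+v_j^\alpha\ge(c/2)^\alpha>0$ on every edge, so each logarithm is well defined and bounded. The main obstacle is the behavior on the boundary faces where some $v_i=0$: when $\alpha<1$ the factor $v_i^{\alpha-1}$ blows up, so $L$ fails to be $C^1$ there and the expression $\sum_i(\partial L/\partial v_i)\dot v_i$ is a priori of indeterminate form $\infty\cdot 0$. I would handle this by noting that each face $\{v_i=0\}\cap\Delta$ is invariant under \eqref{definition ODE}: when $v_i=0$ one has $F_i(v)=\frac{1}{N}\sum_{j\sim i}0^\alpha/(0^\alpha+v_j^\alpha)=0$, so the $i$-th coordinate never leaves $0$. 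Consequently any integral curve is confined either to the relative interior $\{v_i>0\ \forall i\}$, where $L$ is smooth and the computation above is valid verbatim, or to a lower-dimensional face, on which the identical argument applies to the restriction of $L$ (equivalently, to the subgraph supported on the positive coordinates). This reduces everything to the smooth interior computation and completes the proof.
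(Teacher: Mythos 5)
Your proof is correct and follows essentially the same route as the paper: both rest on the identity $F_i=v_i\,\partial L/\partial v_i$ and the resulting computation $\frac{d}{dt}(L\circ v)=\sum_i v_i(\partial L/\partial v_i)^2\ge 0$, with equality exactly on $\Lambda$. Your additional discussion of the invariant boundary faces (where $v_i^{\alpha-1}$ blows up for $\alpha<1$) is a worthwhile extra precaution that the paper's proof glosses over, but it does not change the argument.
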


\begin{proof}
We have
\begin{align}\label{expression dL/dv_i}
\dfrac{\partial L}{\partial v_i}=-1+\dfrac{1}{\alpha N}\sum_{i\sim j}\dfrac{\alpha v_i^{\alpha-1}}{v_i^\alpha+v_j^\alpha}
=-1+\dfrac{1}{N}
\sum_{i\sim j}\dfrac{v_i^{\alpha-1}}{v_i^\alpha+v_j^\alpha},
\end{align}
thus
\begin{align}\label{expression of velocity}
\dfrac{dv_i}{dt}=v_i\left(-1+\dfrac{1}{N}\sum_{i\sim j}\dfrac{v_i^{\alpha-1}}{v_i^\alpha+v_j^\alpha}\right)
=v_i\dfrac{\partial L}{\partial v_i}\cdot
\end{align}
If $v=(v_1(t),\ldots,v_m(t))$, $t\ge 0$, is an integral curve of $F$, then (\ref{expression of velocity}) implies
\begin{eqnarray*}
\dfrac{d}{dt}(L\circ v)=\sum_{i=1}^m\dfrac{\partial L}{\partial v_i}\dfrac{dv_i}{dt}
=\sum_{i=1}^m v_i\left(\dfrac{\partial L}{\partial v_i}\right)^2\ge 0.
\end{eqnarray*}
Also, the last expression is zero if and only if $v_i\left(\frac{\partial L}{\partial v_i}\right)^2=0$ for all
$i\in[m]$, which is equivalent to $F(v)=0$.
\end{proof}

Let $\Lambda\subset\Delta$ be the equilibria set of $F$. For each
$S\subset[m]$, let
\begin{align*}
\Delta_S=\{v\in\Delta:v_i=0\text{ iff }i\notin S\}
\end{align*}
denote the face of $\Delta$ determined by $S$. $\Delta_S$ is a manifold with corners,
positively invariant under the ODE (\ref{definition ODE}).

\begin{definition}
$v\in\Delta_S$ is an {\it $S$-singularity for $L$} if
\begin{align*}
\frac{\partial L}{\partial v_i}(v)=0\ \text{ for all }i\in S.
\end{align*}
\end{definition}

Let $\Lambda_S\subset\Delta_S$ denote the set of $S$-singularities for $L$.

\begin{lemma}\label{characterization of Gamma}
$\Lambda=\bigcup_{S\subset[m]}\Lambda_S$.
\end{lemma}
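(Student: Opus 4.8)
The plan is to exploit the factorization of the vector field already recorded in (\ref{expression of velocity}), namely $F_i(v) = v_i\,\frac{\partial L}{\partial v_i}(v)$, together with the fact that every point of $\Delta$ lies in exactly one face $\Delta_S$, where $S = \{i : v_i > 0\}$ is its support. Since the faces $\{\Delta_S\}_{S \subset [m]}$ partition $\Delta$, it suffices to show, for each fixed $S$, that a point $v \in \Delta_S$ is an equilibrium of $F$ if and only if it is an $S$-singularity for $L$.

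First I would verify the inclusion $\bigcup_S \Lambda_S \subset \Lambda$. Take $v \in \Lambda_S$, so that $v_i = 0$ for $i \notin S$ and $\frac{\partial L}{\partial v_i}(v) = 0$ for $i \in S$. For $i \in S$ the factorization gives $F_i(v) = v_i \cdot 0 = 0$, and for $i \notin S$ one has $v_i = 0$; I claim $F_i(v) = 0$ here as well. This is where a small subtlety enters: for $\alpha < 1$ the partial derivative $\frac{\partial L}{\partial v_i}$ blows up as $v_i \to 0$, so the factored form is not directly usable. Instead I would read $F_i$ off its original definition (\ref{definition F}): when $v_i = 0$, constraint (2) in the definition of $\Delta$ forces $v_j \ge c > 0$ for every neighbor $j \sim i$, whence $\frac{v_i^\alpha}{v_i^\alpha + v_j^\alpha} = 0$ for each such $j$ and therefore $F_i(v) = -v_i + 0 = 0$. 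Thus $F(v) = 0$, i.e. $v \in \Lambda$.

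Conversely, to prove $\Lambda \subset \bigcup_S \Lambda_S$, I would take $v \in \Lambda$ and set $S = \{i : v_i > 0\}$, so that $v \in \Delta_S$ by construction. For each $i \in S$ we have $v_i > 0$, so $F_i(v) = 0$ combined with the factorization forces $\frac{\partial L}{\partial v_i}(v) = 0$; hence $v$ is an $S$-singularity, i.e. $v \in \Lambda_S$. Combining the two inclusions yields the claimed equality.

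The argument is essentially a bookkeeping exercise once the factorization (\ref{expression of velocity}) is in hand, so I do not expect a serious obstacle. The only point requiring care is the behavior at the boundary $v_i = 0$ when $\alpha < 1$: one must avoid manipulating the singular expression $v_i^{\alpha - 1}$ and instead argue from (\ref{definition F}) directly, relying on constraint (2) to guarantee that the neighbors of a vanishing coordinate stay bounded away from zero.
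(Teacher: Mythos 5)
Your argument is correct and follows essentially the same route as the paper's (one-line) proof, namely the factorization $F_i(v)=v_i\,\frac{\partial L}{\partial v_i}(v)$ restricted to each face $\Delta_S$. The only difference is that you are more careful than the paper about the indeterminate form $0\cdot\infty$ when $\alpha<1$ and $v_i=0$, resolving it by going back to the original formula for $F_i$ and using constraint (2) on $\Delta$ --- a worthwhile clarification, but not a different proof.
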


\begin{proof}
If $v\in\Delta_S$, then $\frac{dv_i}{dt}=v_i\frac{\partial L}{\partial v_i}=0$ for all $i\notin S$.
Thus $v\in\Lambda$ iff $\frac{\partial L}{\partial v_i}=0$ for all $i\in S$.
\end{proof}

To prove Theorem~\ref{main thm general graphs}, it remains to show that $L(\Lambda)$ has empty interior.
$L|_{\Delta_S}$ is a $C^\infty$ function, thus by Sard's theorem
$L(\Lambda_S)$ has zero Lebesgue measure, so $L(\Lambda)$ has zero Lebesgue measure
as well. In particular, it has empty interior.

\subsection{Proof of Corollary \ref{corollary nonbipartite}}\label{subsection corollary 2}

Each restriction $L|_{\Delta_S}$ is concave\footnote{This follows from a simple fact on convex functions.
Let $\psi:\mathbb R\rightarrow\mathbb R$ be an increasing, concave function, and let
$f:\mathbb R^k\rightarrow\mathbb R$ be a (strictly) concave function. Then $\psi\circ f$ is
also (strictly) concave.}.
We claim that $L|_{\Delta_S}$ is strictly concave. Let $u,v\in\Delta_S$ and
$c\in(0,1)$. If $L(cu+(1-c)v)=cL(u)+(1-c)L(v)$, then $u_i+u_j=v_i+v_j$ for every $\{i,j\}\in E$, i.e.
\begin{align*}
u_i-v_i=(-1)(u_j-v_j)\, ,\ \ \forall\, \{i,j\}\in E.
\end{align*}
The values of $u_i-v_i$ along any path in $G$ alternate between $u_1-v_1$ and $-(u_1-v_1)$:
\begin{eqnarray}\label{equality non concave}
u_i-v_i&=&\left\{\begin{array}{ll}
u_1-v_1&\text{ if the distance from }i\text{ to }1\text{ is even,}\\
-(u_1-v_1)&\text{ if the distance from }i\text{ to }1\text{ is odd}.\\
\end{array}\right.
\end{eqnarray}
Let $A$ be the vertices within even distance to $1$ and $B$ those within odd distance
to $1$. If $G$ is non-bipartite, then $A\cap B\not=\emptyset$, thus $u_1=v_1$.
If $G$ is bipartite, then $V=A\cup B$ is the bipartition.
By (\ref{equality non concave}),
\begin{align*}
0=\sum_{i=1}^m u_i-\sum_{i=1}^m v_i=(u_1-v_1)\#A-(u_1-v_1)\#B.
\end{align*}
Because $G$ is not balanced bipartite, $\#A\not=\#B$ and again $u_1=v_1$. In any case we have
$u_1=v_1$, i.e. $u=v$. Thus $L|_{\Delta_S}$ is strictly concave. By the same reasoning as in
the end of the proof of Theorem \ref{main thm general graphs}, the corollary is established.

\section{Non-convergence to unstable equilibria}\label{section lemma unstable equilibrium}

In this section we give checkable conditions to guarantee that the random process has zero
probability to converge to unstable equilibria of $F$ when $\alpha\le 1$. This is Lemma
\ref{lemma zero probability unstable}, and its proof is an adaptation of the
proof of Theorem 1.3 in \cite{pemantle1992vertex}.

The checkable condition is in terms of the partial derivatives of $L$. Before stating it,
let us make some general considerations. By (\ref{expression dL/dv_i}),
\begin{align*}
\dfrac{\partial L}{\partial v_1}=-1+\dfrac{1}{N}\sum_{i\sim 1}\dfrac{v_1^{\alpha-1}}{v_1^\alpha+v_i^\alpha}\cdot
\end{align*}
$\partial L/\partial v_1$ is finite if $\alpha=1$ or $v_1>0$, and infinite otherwise.
Define $\partial L/\partial v_1:\Delta\rightarrow \mathbb R\cup\{\infty\}$, which is
continuous. In particular, if $\partial L/\partial v_1(v)>0$ then it is positive in a
neighborhood of $v$. In the proof of Lemma \ref{lemma zero probability unstable}
we will make use of the lemma below.

\begin{lemma}\label{lemma control neighborhood of v}
Let $v\in\Delta$ with $v_1=0$ and $\partial L/\partial v_1(v)>3\delta$.
Then there exists a neighborhood $\mathcal N$ of $v$, an element $u\in\mathcal N$,
and $\varepsilon_0>0$ such that
\begin{enumerate}[(i)]
\item $\frac{\partial L}{\partial v_1}(u)>3\delta+\frac{m\varepsilon_0}{N}$, and
\item for all $w\in\mathcal N$ and $i\sim 1$ it holds
\begin{align*}
\dfrac{w_1^{\alpha-1}}{w_1^\alpha+w_i^\alpha}>\dfrac{u_1^{\alpha-1}}{u_1^\alpha+u_i^\alpha}-\varepsilon_0.
\end{align*}
\end{enumerate}
\end{lemma}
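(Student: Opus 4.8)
The plan is to verify the two requirements in turn, the subtle point being the behaviour of the interaction terms
\[
g_i(w):=\frac{w_1^{\alpha-1}}{w_1^\alpha+w_i^\alpha},\qquad i\sim 1,
\]
as $w_1\to 0^+$. Since $v\in\Delta$ forces $v_i+v_j\ge c>0$ on every edge, each neighbour $i\sim 1$ satisfies $v_i\ge c$, so the only source of blow-up is the factor $w_1^{\alpha-1}$; moreover $g_i$ is nonincreasing in each of its two arguments. When $\alpha=1$ there is no singularity at all: $g_i(w)=1/(w_1+w_i)$ is continuous near $v$, so one simply takes $u=v$, picks any $\varepsilon_0<\frac{N}{m}\bigl(\frac{\partial L}{\partial v_1}(v)-3\delta\bigr)$ to obtain (i), and gets (ii) from ordinary continuity of the $g_i$ on a small ball $\mathcal N=B(v,r)\cap\Delta$.

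The interesting case is $\alpha<1$, where $v_1=0$ gives $g_i(v)=+\infty$ and hence $\frac{\partial L}{\partial v_1}(v)=+\infty$; now $u=v$ is useless because the right-hand side of (ii) would be $+\infty$. Instead I would move the base point off the singular face. Fix $\varepsilon_0\in(0,N/m)$, so that $\frac{m\varepsilon_0}{N}<1$. Because $\frac{\partial L}{\partial v_1}$ is continuous into $\mathbb R\cup\{\infty\}$ and equals $+\infty$ at $v$, its superlevel set $\{\frac{\partial L}{\partial v_1}>3\delta+1\}$ is a relative neighbourhood of $v$, and (moving $v$ toward the relative interior of the convex set $\Delta$) it contains points with $w_1>0$ arbitrarily close to $v$. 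I would take $u$ to be such a point, with $u_1>0$ (how small is fixed during the verification of (ii) below). This already yields (i), since $\frac{\partial L}{\partial v_1}(u)>3\delta+1>3\delta+\frac{m\varepsilon_0}{N}$, and it makes every $g_i(u)$ finite.

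The heart of the matter is (ii). By monotonicity, for any $w$ with $w_1\le u_1$ one has $g_i(w)\ge g_i(u_1,w_i)$ for free, so the only dangerous direction is $w_1>u_1$. I would therefore not use a round ball but the capped neighbourhood
\[
\mathcal N=\{w\in\Delta:\ |w-v|<r,\ w_1<u_1+\eta\},
\]
which is relatively open and still contains both $v$ (as $v_1=0<u_1+\eta$) and $u$. Splitting $g_i(w)-g_i(u)=\bigl[g_i(w_1,w_i)-g_i(u_1,w_i)\bigr]+\bigl[g_i(u_1,w_i)-g_i(u_1,u_i)\bigr]$, the first bracket is $\ge 0$ when $w_1\le u_1$ and is $\ge-C_1\eta$ when $u_1<w_1<u_1+\eta$, where $C_1$ bounds $|\partial_{w_1}g_i|$ on $\{w_1\ge u_1\}$ (finite because $u_1>0$); the second bracket is governed by continuity of $g_i(u_1,\cdot)$ and is small once $w_i$ is close to $u_i$. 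Choosing $\eta$ and then $r$ small enough makes both brackets exceed $-\varepsilon_0$, which is (ii).

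The one place that needs genuine care, and the main obstacle, is that the constants above degenerate as $u_1\to0$: $|\partial_{w_1}g_i|\sim u_1^{\alpha-2}$ and $|\partial_{w_i}g_i|\sim u_1^{\alpha-1}$ both blow up, and when vertex $1$ is adjacent to every other vertex one is forced to take the compensating mass for $u_1$ out of the neighbours themselves, so that $u_i\neq v_i$ and the second bracket does not vanish as $r\to0$. The resolution is that this residual gap is of order $u_1^\alpha$, hence still tends to $0$ with $u_1$; fixing the parameters in the order $\varepsilon_0$, then $u$ (with $u_1$ small), then $\eta$ and $r$ keeps all the estimates consistent and closes the proof.
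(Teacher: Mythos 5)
Your proof is correct, but it takes a genuinely different route from the paper's. For $\alpha<1$ the paper does not argue via monotonicity and derivative bounds at all: it observes that each map $L_i(w)=w_1^{\alpha-1}/(w_1^\alpha+w_i^\alpha)$ is \emph{convex} on $\overline{\mathcal N}$ and identically $+\infty$ on the face $\{w_1=0\}$, so its minimum over $\overline{\mathcal N}$ is attained off that face; it then excises a small neighbourhood $\mathcal V$ of the face avoiding all the minimizers, takes $u$ to be any point of $\overline{\mathcal N}\setminus\mathcal V$, and obtains (ii) from uniform continuity on the compact set $\overline{\mathcal N}\setminus\mathcal V$ together with the trivial bound $L_i(w)\ge\min_{\overline{\mathcal N}\setminus\mathcal V}L_i>L_i(u)-\varepsilon_0$ for $w\in\mathcal V$. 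You replace this soft compactness-and-convexity step by the monotonicity of $g_i$ in each variable, the capped neighbourhood $\{w_1<u_1+\eta\}$, and explicit derivative bounds on $\{w_1\ge u_1\}$, fixing the parameters in the order $\varepsilon_0,u,\eta,r$; this is more quantitative (it exhibits the loss from moving the base point off the face as $O(u_1^\alpha)$) at the cost of more bookkeeping. The one point you should make explicit is that the two requirements on $r$, namely $r>|u-v|$ so that $u\in\mathcal N$, and $r$ small enough that $u_1^{\alpha-1}r<\varepsilon_0/4$, are simultaneously satisfiable; they are, since $|u-v|=O(u_1)$ while the second only demands $r=O(\varepsilon_0 u_1^{1-\alpha})$, and $u_1<C^{-1}\varepsilon_0 u_1^{1-\alpha}$ as soon as $u_1^\alpha<\varepsilon_0/C$. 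Both proofs handle $\alpha=1$ identically, by plain continuity with $u=v$.
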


\begin{proof}
Firstly, assume that $\alpha=1$. $\partial L/\partial v_1$ is continuous,
so we can fix a neighborhood $\mathcal N$ of $v$ satisfying (i). Each
$w\in\overline{\mathcal N}\mapsto 1/(w_1+w_i)$, $i\sim 1$, is uniformly continuous,
thus (ii) also holds if $\mathcal N$ is small enough.

Now assume that $\alpha<1$. Again, we can fix a neighborhood $\mathcal N$ of $v$ satisfying (i).
In this case, the maps $L_i:w\in\overline{\mathcal N}\mapsto w_1^{\alpha-1}/(w_1^\alpha+w_i^\alpha)$,
$i\sim 1$, are not uniformly continuous. But they are convex\footnote{This
is consequence of two facts of convex functions. {\bf Fact 1.} If
$f:\mathbb R^k\rightarrow(0,\infty)$ is concave, then $1/f$ is convex. {\bf Fact 2.}
$(x,y)\in(0,\infty)^2\mapsto x^\alpha y^{1-\alpha}$ is concave.}.
Because $L_i|_{\overline{\mathcal N}\cap\{w_1=0\}}=\infty$, its minimum is attained
outside $\overline{\mathcal N}\cap\{w_1=0\}$. Thus we can choose a small neighborhood
$\mathcal V$ of $\overline{\mathcal N}\cap\{w_1=0\}$ that does
not contain the minima of none of the $L_i$. Now, $L_i|_{\overline{\mathcal N}\backslash\mathcal V}$
is uniformly continuous. Restrict $\mathcal N$ if necessary and choose any
$u\in\overline{\mathcal N}\backslash\mathcal V$. Thus if $w\in\overline{\mathcal N}\backslash\mathcal V$
then $L_i(w)> L_i(u)-\varepsilon_0$, and if $w\in\mathcal V$ then
$L_i(w)\ge\min L_i|_{\overline{\mathcal N}\backslash\mathcal V}> L_i(u)-\varepsilon_0$.
\end{proof}

\begin{lemma}\label{lemma zero probability unstable}
Let $G$ be a finite, connected graph,  $\alpha\le 1$ and $L$ as in (\ref{definition L}). Let
$v$ be an equilibrium with $v_1=0$. If $\partial L/\partial v_1(v)>0$, then
\begin{align}\label{equation non-convergence unstable}
\P{\lim_{n\to \infty}x(n)=v}=0.
\end{align}
\end{lemma}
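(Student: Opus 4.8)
The plan is to show that the reinforcement toward vertex $1$ is strong enough to prevent $x_1(n)$ from converging to $0$. The statement says: if $v$ is an equilibrium with $v_1 = 0$ but $\partial L/\partial v_1(v) > 0$, then almost surely $x(n) \not\to v$. The intuition from \eqref{expression of velocity} is that $dv_1/dt = v_1 \cdot \partial L/\partial v_1$, so near such a $v$ the drift on the $1$-coordinate is positive (once $v_1$ is slightly away from $0$), meaning the vector field pushes $x_1$ \emph{away} from $0$. Thus $v$ is dynamically repelling in the $x_1$-direction, and the goal is to upgrade this deterministic repulsion into an almost-sure non-convergence statement for the stochastic process. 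As the authors indicate, this follows the template of Pemantle's argument (Theorem 1.3 in \cite{pemantle1992vertex}) for non-convergence to unstable equilibria.

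\medskip

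\textbf{First} I would fix $\delta > 0$ with $\partial L/\partial v_1(v) > 3\delta$ and invoke Lemma~\ref{lemma control neighborhood of v} to obtain a neighborhood $\mathcal N$ of $v$, a reference point $u \in \mathcal N$, and $\varepsilon_0 > 0$ controlling the summands $w_1^{\alpha-1}/(w_1^\alpha + w_i^\alpha)$ uniformly on $\mathcal N$. The point of that lemma is that, even when $\alpha < 1$ and the relevant maps blow up on $\{w_1 = 0\}$, one still gets a uniform lower bound on the conditional drift of $x_1$ throughout $\mathcal N$. \textbf{Next}, I would argue by contradiction: suppose $\P{x(n) \to v} > 0$. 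On the event $\{x(n) \to v\}$, the process eventually enters and stays in $\mathcal N$, so from some random time $n_0$ onward the conditional expected increment of $x_1$ satisfies a bound of the form
\begin{align*}
\EE{x_1(n+1) - x_1(n)}{\mathcal F_n} = \gamma_n\, x_1(n)\,\frac{\partial L}{\partial v_1}(x(n)) \ge \gamma_n\, x_1(n)\cdot 2\delta
\end{align*}
(using the uniform estimate, absorbing the $\varepsilon_0$ correction). This is a submartingale-type lower bound on the \emph{logarithm} of $x_1(n)$: roughly, $x_1$ grows at a geometric rate dictated by $\sum \gamma_n = \infty$.

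\medskip

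\textbf{The core step} is to construct an auxiliary process, typically of the form $Y_n = x_1(n)^{-1}$ or $Y_n = -\log x_1(n)$ plus a compensating martingale term, and show it is a bounded-increment supermartingale (or that $1/x_1(n)$ is a positive supermartingale) on the event in question. Because $\sum_n \gamma_n = \infty$ drives the drift while $\sum_n \gamma_n^2 < \infty$ controls the martingale fluctuations (exactly the two facts already verified in Section~\ref{section dynamical approach} for condition (ii)), a convergence/optional-stopping argument forces $x_1(n) \not\to 0$ on a set of full conditional probability, contradicting $x_1(n) \to v_1 = 0$. The decomposition $x_1(n+1)-x_1(n) = \gamma_n[F_1(x(n)) + (u_n)_1]$ from \eqref{definition system} is what lets me separate the favorable drift $\gamma_n F_1$ from the noise $\gamma_n (u_n)_1$, whose partial sums converge by the martingale estimate established earlier.

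\medskip

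\textbf{The main obstacle} I anticipate is the case $\alpha < 1$, where the drift coefficient $v_1^{\alpha-1}/(v_1^\alpha + v_j^\alpha)$ is unbounded as $v_1 \to 0^+$: the repulsion is in fact \emph{stronger} near the boundary, which is good for non-convergence but delicate to handle because the increments of any naive auxiliary process are no longer uniformly bounded. This is precisely why Lemma~\ref{lemma control neighborhood of v} is phrased to give a \emph{lower} bound valid even at $w_1 = 0$ via convexity, and I would lean on exactly that lemma to tame the singularity — ensuring the drift estimate holds uniformly on $\mathcal N$ without requiring control of the blow-up. A secondary technical point is that the increments $x_1(n+1) - x_1(n)$ must be shown small enough (on the scale of $\gamma_n$) that the process cannot jump out of $\mathcal N$ or overshoot, so that the local drift analysis applies; this is where boundedness of the $\delta_{i \leftarrow j}$ variables and the explicit form of $\gamma_n$ in \eqref{definition gamma_n} enter.
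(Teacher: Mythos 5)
Your overall skeleton---arguing by contradiction, using Lemma~\ref{lemma control neighborhood of v} to get a uniform lower bound on the drift coefficient in a neighborhood $\mathcal N$ of $v$, and tracking $\log x_1(n)$ until its expected growth contradicts $\log x_1\le 0$---is the same as the paper's (and Pemantle's). But there is a genuine gap at the step where you pass from the positive drift of $x_1$ to a ``submartingale-type lower bound on the logarithm.'' Since $\log$ is concave (and $x\mapsto 1/x$ convex), Jensen's inequality goes the wrong way: $\mathbb{E}[\log x_1(n+1)\mid\mathcal F_n]\le\log\mathbb{E}[x_1(n+1)\mid\mathcal F_n]$, so the favorable drift of $x_1$ transfers to $\log x_1$ only if you also control the fluctuations of the \emph{relative} increment. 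Here $\Delta x_1/x_1(n)=\gamma_n\bigl(-1+C_1(n+1)/(Nx_1(n))\bigr)$, and when $B_1(n)$ is of order $1$ this is of order $1$, not $o(1)$; the second-order correction to $\log(1+\Delta x_1/x_1)$ can then swamp the drift term $2\delta\gamma_n$. In that regime neither $-\log x_1(n)$ nor $1/x_1(n)$ is a supermartingale, and the fact that $\sum_n\gamma_n^2<\infty$ controls the noise of $x_1$, not of $\log x_1$.

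The paper supplies exactly the two ingredients your sketch is missing. First, it proves $B_1(n)\to\infty$ almost surely by Borel--Cantelli (vertex $1$ receives a ball at step $n$ with probability at least $1/(2(N_0+nN))$, which is not summable), and then restricts to the event $\{B_1(n)>B\}$ with $B$ large; this makes $\gamma_n/x_1(n)$ at most of order $N/B$ and tames the convexity correction. Second, instead of a one-step supermartingale it works over time blocks $[n,(1+\delta)n]$, couples the process from below with independent Bernoulli variables $E_{t,i}$ whose success probabilities come from Lemma~\ref{lemma control neighborhood of v}, and applies a Chernoff bound: the expected number of successes over a block is proportional to $B_1(n)$, so concentration holds precisely because $B_1(n)>B$ is large, yielding $\mathbb{E}[\log x_1((1+\delta)n)\mid\mathcal G_n]\ge\log x_1(n)+\tfrac12\log(1+c)$ for a fixed $c>0$. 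Iterating over $T_k=(1+\delta)^kn$ sends the expected value of $\log x_1(T_k)$ to $+\infty$, the desired contradiction. Without the preliminary fact $B_1(n)\to\infty$ and some block-level concentration argument in its place, your proposal cannot close.
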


Before embarking into the proof, let us explain why the conditions of the lemma are equivalent to
$v$ being an unstable equilibrium. Firstly, assume that $\alpha=1$. We look at the
jacobian matrix $JF(v)$:
$$
\frac{\partial F_i}{\partial v_j}=\left\{\begin{array}{ll}
v_i\dfrac{\partial ^2L}{\partial v_i\partial v_j}&\text{ if }i\sim j,\\
&\\
\dfrac{\partial L}{\partial v_i}+v_i\dfrac{\partial ^2L}{\partial v_i^2} &\text{if }i= j, \\
&\\
0&\text{otherwise}.\\
\end{array}\right.
$$
Without loss of generality, assume that $v\in \Delta_S$ with $S=\{k+1,\ldots, m\}$. Thus
\begin{align}\label{definition jacobian}
JF(v)=\left[
\begin{array}{cc}
A & 0\\
C & B \\
\end{array}
\right]
\end{align}
where $A$ is a $k\times k$ diagonal matrix with $a_{ii}=\partial L/\partial v_i$, $i\in[k]$.
The spectrum of $JF(v)$ is the union of the spectra of $A$ and $B$. With respect to the inner
product $(x,y)=\sum_{i=k+1}^m x_iy_i/v_i$, $B$ is self-adjoint and negative semidefinite
(by the concavity of $L$), hence the eigenvalues of $B$ are real and nonpositive.
Therefore, $JF(v)$ has one real positive eigenvalue if and only if one of $a_{ii}$ is positive.

When $\alpha<1$, $JF(v)$ is not defined, in particular because $a_{11}$ is not finite.
Nevertheless, $a_{11}$ explodes to infinity, which intuitively means that $v$ is
unstable.

\begin{proof}
Firstly, we claim that
\begin{equation}\label{infinitely many balls}
\P{\lim_{n\to \infty} B_1(n)=\infty}=1.
\end{equation}
This is easy: because $1\le B_i(n)\le N_0+nN$, we have
\begin{align*}\label{transition probability}
\P{1\text{ is chosen among }\{1,i\}\text{ at step }n+1}=\dfrac{B_1(n)^\alpha}{B_1(n)^\alpha+B_i(n)^\alpha}\ge\dfrac{1}{2(N_0+nN)}
\end{align*}
for every $i\sim 1$, and so (\ref{infinitely many balls}) follows from the Borel-Cantelli lemma.

Fix $B>0$ large enough (to be specified later), and define
\begin{align*}
\mathcal{Y}_n=\{x(k)\in\mathcal N,\forall\,k\ge n\}\cap\{B_1(n)>B\},\ \ n>0.
\end{align*}
By (\ref{infinitely many balls}),
$\{\lim_{n\to\infty}x(n)=v\}\subset \bigcup_{m\ge n}\mathcal Y_m$ for any $n>0$. Thus
the lemma will be proved if we show that
\begin{equation}\label{equation non-convergence unstable 2}
\P{\mathcal Y_n}=0\ \ \ \text{ for sufficiently large }n.
\end{equation}

Let $\delta>0$ and $\mathcal N$ and in Lemma \ref{lemma control neighborhood of v}.
For a fixed $n_0$, let $\mathcal G_n=\mathcal F_n\cap\mathcal Y_{n_0}$, and let $c>0$ such that
\begin{eqnarray}\label{definition c}
\left[1+\dfrac{\delta(1+2\delta)}{1+\frac{3}{2}\delta}\right]\dfrac{1}{1+\delta}=1+c.
\end{eqnarray}

We claim that if $B$ is large enough, then there is $n_0>0$ such that
\begin{equation}\label{claim coupling}
\EE{\log x_1((1+\delta)n)}{\mathcal G_n}\ge\log x_1(n)+\frac{1}{2}\log(1+c) \ \text{ for all }n>n_0.
\end{equation}

Before proving the claim, let us show how to conclude the proof of the lemma. By contradiction, assume that
(\ref{equation non-convergence unstable 2}) is not true for some $n>n_0$.
Define $T_k=(1+\delta)^k n$ and $X_k=\log x_1(T_k)$. By (\ref{claim coupling}),
\begin{eqnarray*}
\EE{X_{k+1}}{\mathcal G_n}=\EE{\EE{X_{k+1}}{\mathcal G_{T_k}}}{\mathcal G_n}\ge \EE{X_k}{\mathcal G_n}+\frac{1}{2}\log(1+c).
\end{eqnarray*}
By induction,
\begin{eqnarray*}
\EE{X_k}{\mathcal G_n}\ge X_0+\dfrac{k}{2}\log(1+c)\ge-\log \left(N_0+nN \right) +\dfrac{k}{2}\log(1+c)
\end{eqnarray*}
which is a contradiction, because the left hand side is bounded.

We now prove (\ref{claim coupling}). The proof uses a coupling argument and Chernoff bounds.
Let $t\in\{n+1,\ldots,(1+\delta)n\}$. Restricted to $\mathcal Y_{n_0}$, we have
\begin{eqnarray*}\label{estimation probability}
\P{1\text{ is chosen among }\{1,i\}\text{ at step }t}&=&\frac{B_1(t-1)^\alpha}{B_1(t-1)^\alpha+B_i(t-1)^\alpha}\\
&&\\
&\ge&\dfrac{B_1(n)}{N_0+(t-1)N}\cdot\frac{x_1(t-1)^{\alpha-1}}{x_1(t-1)^\alpha+x_i(t-1)^\alpha}\\
&&\\
&\ge&\dfrac{B_1(n)}{N_0+(t-1)N}\left(\frac{u_1^{\alpha-1}}{u_1^\alpha+u_i^\alpha}-\varepsilon_0\right).
\end{eqnarray*}
Define a family of independent Bernoulli random variables $\{E_{t,i}\}$, $t=n+1,\ldots,(1+\delta)n$, $i\sim 1$,
as follows
\begin{eqnarray*}
\P{E_{t,i}=1}=\dfrac{B_1(n)}{N_0+(t-1)N}\left(\dfrac{u_1^{\alpha-1}}{u_1^\alpha+u_i^\alpha}-\varepsilon_0\right).
\end{eqnarray*}
Now couple $\{E_{t,i}\}$ to our model: if $E_{t,i}=1$, then $1$ is chosen among $\{1,i\}$ at step $t$.
If $n_0$ is large enough\footnote{Because $\log(1+x)>\frac{x}{1+x}$ for small $x>0$, if $n_0$ is large then
\begin{eqnarray*}
N\sum_{t=n+1}^{(1+\delta)n}\frac{1}{N_0+(t-1)N}>\log\left(\dfrac{N_0+(1+\delta)nN}{N_0+nN}\right)>
\log\left(1+\dfrac{\delta}{1+\frac{1}{2}\delta}\right)>\dfrac{\delta}{1+\frac{3}{2}\delta}\,\cdot
\end{eqnarray*}}, then
\begin{eqnarray*}
\E{\sum_{n+1\le t\le(1+\delta)n\atop{i\sim 1}}E_{t,i}}&\ge&B_1(n)\left(\sum_{t=n+1}^{(1+\delta)n}\dfrac{1}{N_0+(t-1)N}\right)
\left(\sum_{i\sim 1}\dfrac{u_1^{\alpha-1}}{u_1^\alpha+u_i^\alpha}-m\varepsilon_0\right)\\
&>&B_1(n)\dfrac{\delta(1+3\delta)}{1+\frac{3}{2}\delta}\cdot
\end{eqnarray*}
By Chernoff bounds (see Corollary A.1.14 of \cite{alon2004probabilistic}), if $\varepsilon_1>0$
then there is $B_0$ large enough such that
\begin{eqnarray}\label{large deviation inequality}
\P{\sum_{n+1\le t\le(1+\delta)n\atop{i\sim 1}}E_{t,i}>B_1(n)\dfrac{\delta(1+2\delta)}{1+\frac{3}{2}\delta}}>1-\varepsilon_1
\end{eqnarray}
for every $B_1(n)>B_0$. Whenever (\ref{large deviation inequality}) holds, the coupling gives that
\begin{eqnarray}
B_1((1+\delta)n)-B_1(n)\ge\sum_{n+1\le t\le(1+\delta)n\atop{i\sim 1}}E_{t,i}>B_1(n)\dfrac{\delta(1+2\delta)}{1+\frac{3}{2}\delta}
\end{eqnarray}
and thus by (\ref{definition c}) we have
\begin{eqnarray*}
x_1((1+\delta)n)>x_1(n)\left[1+\dfrac{\delta(1+2\delta)}{1+\frac{3}{2}\delta}\right]\dfrac{1}{1+\delta}=x_1(n)(1+c).
\end{eqnarray*}
From(\ref{large deviation inequality}), it follows that
\begin{eqnarray}\label{high_probability_increase_3}
\PP{x_1((1+\delta)n)>x_1(n)(1+c)}{\mathcal G_n}>1-\varepsilon_1.
\end{eqnarray}
Because $x_1((1+\delta)n)>\frac{x_1(n)}{1+\delta}$ and $\varepsilon_1>0$ can be chosen arbitrarily small,
(\ref{high_probability_increase_3}) gives
\begin{eqnarray*}  \label{expectation_increase}
\EE{\log x_1((1+\delta)n)}{\mathcal G_n}&>&(1-\varepsilon_1)\log(x_1(n)(1+c))+\varepsilon_1\log\left(\frac{x_1(n)}{1+\delta}\right)\\
&>&\log x_1(n)+\frac{1}{2}\log(1+c),
\end{eqnarray*}
thus establishing (\ref{claim coupling}).
\end{proof}

\section{Regular graphs: proof of Theorem \ref{main thm alpha=1}}\label{section regular graphs}

Here, $G$ is a finite, $r$-regular, connected graph, and $\alpha=1$. Assume first that $G$ is
non-bipartite, and let $u\in\Delta$ be the uniform measure. By Corollary \ref{corollary nonbipartite},
$\Lambda$ is finite and $x(n)$ converges to an element of $\Lambda$.
Furthermore, $\#\Lambda_S\le 1$ for every $S\subset[m]$. It is easy to check that $u\in\Lambda_{[m]}$,
thus $\Lambda_{[m]}=\{u\}$. We will show that any other equilibrium satisfies the
conditions of Lemma \ref{lemma zero probability unstable}, in which case we conclude the proof
of Theorem~\ref{main thm alpha=1} (a).

Now assume that $G$ is bipartite, and that $V=A\cup B$ is the bipartition of $G$. Let
$\Omega$ be defined as in (\ref{definition Omega}). Every $v\in\Omega$
is an equilibrium: for $i\in A$
\begin{align*}
F_i(v)=-v_i+\dfrac{1}{N}\sum_{j\sim i}\dfrac{v_i}{v_i+v_j}=-p+\dfrac{rp}{N(p+q)}=0,
\end{align*}
and the same holds for $i\in B$. We will show that any other equilibrium satisfies
the conditions of Lemma \ref{lemma zero probability unstable} and thus is unstable.
This being proved, Theorem~\ref{main thm alpha=1} (b) is established.

Summarizing the above discussion, to prove Theorem~\ref{main thm alpha=1} we just need
to prove the lemma below.

\begin{lemma} \label{equilibria_stability alpha=1}
Let $G$ be a finite, regular, connected graph, and let $\alpha=1$.
\begin{enumerate}[(a)]
\item If $G$ is non-bipartite, then every element of $\Lambda\backslash\{u\}$ is unstable.
\item If $G$ is bipartite, then every element of $\Lambda\backslash\Omega$ is unstable.
\end{enumerate}
\end{lemma}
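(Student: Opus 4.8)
The plan is to reduce both statements to a single computation: I will show that any equilibrium $v$ lying outside the ``good'' set (namely $\{u\}$ in case (a), $\Omega$ in case (b)) has a vertex $i$ with $v_i=0$ and $\frac{\partial L}{\partial v_i}(v)>0$. By the block structure (\ref{definition jacobian}) of $JF(v)$ and the discussion following it, the spectrum of $JF(v)$ is the union of the nonpositive spectrum of $B$ with the diagonal entries $a_{ii}=\frac{\partial L}{\partial v_i}$ ranging over the indices $i$ with $v_i=0$; hence a single positive such entry produces a positive real eigenvalue, making $v$ unstable (and, via Lemma \ref{lemma zero probability unstable}, giving $\P{\lim_{n\to\infty}x(n)=v}=0$). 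First I would dispose of the full-support equilibria. Writing $Z=\{i:v_i=0\}$, if $Z=\emptyset$ then all partials of $L$ vanish, so $v$ is an unconstrained critical point of the concave function $L|_{\Delta_{[m]}}$, hence a global maximizer on that face. In case (a) this forces $v=u$ (as noted in Section \ref{section regular graphs}, $\#\Lambda_S\le1$ and $u\in\Lambda_{[m]}$); in case (b), comparing $v$ with any full-support point of $\Omega$ through the equality-in-concavity computation of Section \ref{subsection corollary 2} gives $v-v'\in\mathrm{span}(\mathbf 1_A-\mathbf 1_B)$, whence $v\in\Omega$. Thus any troublesome equilibrium has $Z\neq\emptyset$.

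The heart of the argument is an identity for such $v$. Since $v\in\Delta$ satisfies $v_i+v_j\ge c>0$ on every edge, no edge joins two vertices of $Z$, i.e.\ $Z$ is independent; in particular every neighbor of a vertex of $Z$ lies in $S=[m]\setminus Z$ and has positive coordinate, so the relevant partials are finite (here $\alpha=1$, and the numerator $v_i^{\alpha-1}$ in (\ref{expression dL/dv_i}) equals $1$). Summing (\ref{expression dL/dv_i}) over $i\in Z$ and exchanging the order of summation, and letting $d_j=\#\{k\sim j:k\in Z\}$, I obtain
\begin{align*}
\sum_{i\in Z}\frac{\partial L}{\partial v_i}(v)=-|Z|+\frac1N\sum_{i\in Z}\sum_{j\sim i}\frac{1}{v_j}=-|Z|+\frac1N\sum_{j\in S}\frac{d_j}{v_j}.
\end{align*}
So it suffices to prove $\sum_{j\in S}d_j/v_j> N|Z|$.

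To estimate this I would use the equilibrium condition together with regularity. For $j\in S$ the condition $\frac{\partial L}{\partial v_j}=0$ reads $\sum_{k\sim j}\frac{1}{v_j+v_k}=N$; bounding each term by $1/v_j$ and using that $j$ has exactly $r$ neighbors gives $N\le r/v_j$, i.e.\ $v_j\le r/N=2/m$ (recall $N=mr/2$). Hence $1/v_j\ge m/2$ on $S$, and since $Z$ is independent the quantity $\sum_{j\in S}d_j=r|Z|$ counts the $S$--$Z$ edges, so
\begin{align*}
\sum_{j\in S}\frac{d_j}{v_j}\ge\frac m2\sum_{j\in S}d_j=\frac m2\,r|Z|=N|Z|.
\end{align*}
It remains to analyze equality. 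Equality forces $v_j=2/m$ for every $j\in S$ with $d_j>0$, and $v_j=2/m$ in turn (equality in $N\le r/v_j$) forces every neighbor of $j$ to lie in $Z$. Then the set $S'=\{j\in S:d_j>0\}$ has all neighbors in $Z$, while $Z$ has all neighbors in $S'$, so $Z\cup S'$ is a union of connected components; connectedness gives $Z\cup S'=V$, exhibiting a bipartition $V=Z\cup S'$ with $v\equiv0$ on $Z$ and $v\equiv2/m$ on $S'$.

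In case (a) this contradicts $G$ being non-bipartite, so the inequality is strict, $\sum_{i\in Z}\frac{\partial L}{\partial v_i}>0$, and some vertex of $Z$ has positive partial derivative, as required. In case (b) the bipartition $Z\cup S'$ must coincide with $A\cup B$, since a connected bipartite graph has a unique bipartition; hence $v$ is the element of $\Omega$ (see (\ref{definition Omega})) equal to $0$ on one part and $2/m$ on the other. Consequently any $v\in\Lambda\setminus\Omega$ has $Z\neq\emptyset$ and strict inequality, and is unstable. The main obstacle is precisely this equality discussion: one must extract from the single scalar relation $\sum_{j\in S}d_j/v_j=N|Z|$ the rigid combinatorial conclusion that $Z$ is a full side of a bipartition, and it is the use of connectedness (to upgrade ``$Z\cup S'$ is a union of components'' to ``$Z\cup S'=V$'') together with uniqueness of the bipartition that cleanly separates the non-bipartite case from the bipartite one.
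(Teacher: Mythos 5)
Your proof is correct, and while it shares the paper's overall strategy (exhibit a vertex $i$ with $v_i=0$ and $\frac{\partial L}{\partial v_i}(v)>0$, then invoke the block structure of $JF(v)$ and Lemma \ref{lemma zero probability unstable}), the core inequality is established by a genuinely different computation. The paper sums $\frac{\partial L}{\partial v_i}$ over \emph{all} vertices and applies the arithmetic--harmonic mean inequality to the $2N$ terms $v_i+v_j$, obtaining $\sum_i\sum_{j\sim i}\frac{1}{v_i+v_j}\ge Nm$ with equality iff $v_i+v_j$ is constant on edges; the alternating-path argument then identifies the equality cases with $u$ (non-bipartite) or $\Omega$ (bipartite), and this single inequality handles full-support and boundary equilibria uniformly. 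You instead sum only over the zero set $Z$, bound $v_j\le 2/m$ on the support directly from the equilibrium condition $\sum_{k\sim j}\frac{1}{v_j+v_k}=N$, and close the estimate by counting $S$--$Z$ edges via regularity; the full-support case is then treated separately through concavity of $L|_{\Delta_{[m]}}$. Your equality analysis is more combinatorially rigid (it forces $Z$ to be an entire side of a bipartition with $v\equiv 2/m$ on the other side, i.e.\ an extreme point of $\Omega$), and all the supporting steps check out: $Z$ is independent because $\Lambda\subset\Delta$ and condition (2) in the definition of $\Delta$ gives $v_i+v_j\ge c>0$ on edges; the double-counting identity $\sum_{j\in S}d_j=r|Z|$ is correct; and equality in $N\le r/v_j$ does force all neighbors of $j$ into $Z$, so connectedness upgrades $Z\cup S'$ to all of $V$. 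The paper's AM--HM argument is shorter and avoids the case split on $Z=\emptyset$; yours isolates more explicitly which boundary equilibria sit at the threshold, at the cost of the separate concavity argument for interior equilibria.
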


\begin{proof}
Let $v\in\Lambda_S$ satisfying either (a) or (b). By Lemma \ref{lemma zero probability unstable},
it is enough to show that $\partial L/\partial v_i>0$ for some $i\in[m]\backslash S$.
Since $\partial L/\partial v_i=0$ for $i\in S$, it suffices to show that
$\sum_{i=1}^m\frac{\partial L}{\partial v_i}>0$, i.e.
\begin{equation}\label{positive_sum}
-m+\dfrac{1}{N}\sum_{i=1}^m\sum_{j\sim i}\dfrac{1}{v_i+v_j}>0.
\end{equation}
We first claim that the above expression is nonnegative. For this, note that the summand
has $2N$ terms and, by the arithmetic-harmonic mean inequality,
\begin{equation}\label{keyinequality}
\left[\sum_{i=1}^m \sum_{j\sim i}\frac{1}{v_i+v_j}\right]\cdot\left[\sum_{i=1}^m \sum_{j\sim i}(v_i+v_j)\right]\ge (2N)^2,
\end{equation}
with equality if and only if
\begin{equation} \label{equalitycons}
v_i+v_j={\rm const.}\ ,\ \ \forall\, \{i,j\}\in E.
\end{equation}
Since $G$ is $r$-regular, $N=rm/2$ and
\begin{align*}
\sum_{i=1}^m \sum_{j\sim i}(v_i+v_j)=2r.
\end{align*}
So (\ref{keyinequality}) gives
\begin{align}\label{am-hm inequality}
\sum_{i=1}^m\sum_{j\sim i}\frac{1}{v_i+v_j}\ge\dfrac{(2N)^2}{2r}=Nm,
\end{align}
thus proving our claim.

If (\ref{positive_sum}) is not true, then (\ref{equalitycons}) holds.
Fix the vertex $1$ of $G$, and let $v_1=p$ and $v_i=q$ for every neighbor $i\sim 1$.
Thus the values of $v_i$ along any path in the graph $G$ alternate between $p$ and $q$, i.e.
\begin{eqnarray*}
v_i&=&\left\{\begin{array}{ll}
p&\text{ if the distance from }i\text{ to }1\text{ is even,}\\
q&\text{ if the distance from }i\text{ to }1\text{ is odd}.\\
\end{array}\right.
\end{eqnarray*}

If $G$ is non-bipartite, it has a cycle of odd length, then $p=q$ and $v=u$, and if $G$ is
bipartite then $v\in\Omega$. In both cases, we get a contradiction.
\end{proof}

\section{Proof of Theorem~\ref{main thm alpha<1}}

If $\alpha<1$, then the function $x\mapsto x^\alpha,x>0$, is strictly concave. Thus each restriction
$L|_{\Delta_S}$ is also strictly concave\footnote{Again, we are using that
if $\psi:\mathbb R\rightarrow\mathbb R$ is an increasing, concave function, and
$f:\mathbb R^k\rightarrow\mathbb R$ is a (strictly) concave function, then $\psi\circ f$ is
also (strictly) concave.}, so $L$ has at most one $S$-singularity.

If $S\not=[m]$, then $v_i=0$ and $\partial L/\partial v_i=\infty$ on
$\Lambda_S$. By Lemma \ref{lemma zero probability unstable}, $\Lambda_S$ consists of an
unstable equilibrium. Let $\Lambda_{[m]}=\{v\}$. Thus $v$ has non-zero entries and
$x(n)$ converges to $v$ almost surely.

\section{Star graphs: proof of Theorem~\ref{main thm star graph}}\label{section main thm star graph}

When $G$ is the star graph with $m$ vertices and $m$ is the vertex with higher degree, (\ref{definition ODE}) becomes
\begin{eqnarray}\label{definition ODE star graph}
\left\{
\begin{array}{rcll}
\dfrac{dv_i(t)}{dt}&=&-v_i(t)+\dfrac{1}{m-1}\cdot\dfrac{v_i(t)^\alpha}{v_i(t)^\alpha+v_m(t)^\alpha}\ ,&i\in[m-1],\\
&&&\\
\dfrac{dv_m(t)}{dt}&=&-v_m(t)+\dfrac{1}{m-1}\displaystyle\sum_{j=1}^{m-1} \frac{v_m(t)^\alpha}{v_m(t)^\alpha+v_j(t)^\alpha}\,\cdot&
\end{array}\right.
\end{eqnarray}

\noindent {\bf Case 1: $\alpha\le 1$.}\\

When $\alpha=1$, note that $(0,\ldots,1)\in\Lambda$. We will show that any $v\in\Lambda\backslash\{(0,\ldots,0,1)\}$
satisfies the conditions of Lemma \ref{lemma zero probability unstable}.
By the arithmetic-harmonic mean inequality,
\begin{align*}
\dfrac{\partial L}{\partial v_m}=-1+\dfrac{1}{m-1}\sum_{i=1}^{m-1}\dfrac{1}{v_i+v_m}
\ge-1+\dfrac{m-1}{1+(m-2)v_m}>0.
\end{align*}
Because $v\in\Lambda$, we also have $v_m=0$.\\

When $\alpha<1$, direct calculations show that
\begin{align*}
\left(\frac{1}{m-1+(m-1)^{\frac{1}{1-\alpha}}}\,,\,\ldots\,,\,\frac{1}{m-1+(m-1)^{\frac{1}{1-\alpha}}}\,,\,
\frac{(m-1)^{\frac{1}{1-\alpha}}}{m-1+(m-1)^{\frac{1}{1-\alpha}}}\right)
\end{align*}
is an equilibrium point in the interior of $\Delta$.  By concavity of $L$, it is the
unique equilibrium point in the interior of $\Delta$.
The result thus follows from Theorem \ref{main thm alpha<1}.\\

\noindent{\bf Case 2: $\alpha>1$.}\\

When $m=2$, our model is a class of generalized P\'{o}lya's urn. For simplicity,
we refer to this process as ``g-urn". It is known
(see e.g. Theorem 4.1 of \cite{hill1980strong}) that in this case
\begin{align*}
\P{\lim_{n\to\infty}x(n)=(0,1)}>0\ \text{ and }\ \P{\lim_{n\to\infty}x(n)=(1,0)}>0.
\end{align*}

Now assume $m>2$. Observe that, as events,
\begin{align*}
\left\{\lim_{n\to\infty}x(n)=\left(\frac{1}{m-1},\ldots,\frac{1}{m-1},0\right)\right\}
\supset\bigcap_{i=1}^{m-1}\bigcap_{n\ge 1}\{\delta_{i\leftarrow m}(n)=1\}.
\end{align*}
By a coupling argument, we can identify this last event to the following one: in $m-1$
independent g-urns, just one color of ball is added in each g-urn
since the beginning of the process. Rubin's Theorem
(see e.g. Theorem 3.6 of \cite{pemantle2007survey}) guarantees that the event
``just one color of ball is added to the g-urn since the beginning of the process''
has positive probability, and so
\begin{align*}
\P{\lim_{n\to\infty}x(n)=\left(\frac{1}{m-1},\ldots,\frac{1}{m-1},0\right)}> 0.
\end{align*}

To prove the other claim, first observe that
\begin{align*}
\left\{\lim_{n\to\infty}x(n)=\left(0,\ldots,0,1\right)\right\}\supset
\bigcap_{i=1}^{m-1}\bigcap_{n\ge 1}\{\delta_{i\leftarrow m}(n)=0\}.
\end{align*}
By a coupling argument, the term on the right hand side of the above inclusion has positive probability
(again by Rubin's Theorem). This concludes the proof of Theorem~\ref{main thm star graph}.

\begin{remark}\label{remark independent set}
Given a finite connected graph $G=(V,E)$, call $I\subset V$ an {\it independent set} if $\{i,j\}\not\in E$ for
$i,j\in I$. The proof of Theorem~\ref{main thm star graph} gives the following: if $\alpha>1$ and $I$ is
an independent set, then
\begin{align*}
\P{\lim_{n\to \infty}x_i(n)=0,\ \forall\,i\in I}>0.
\end{align*}
\end{remark}

\section{Variants of the model} \label{section variants of the model}

\subsection{Edges with different weight functions}

Let $G=(V,E)$ be a finite, connected graph. For each edge $\{i,j\}\in E$, let
$f_{\{i,j\}}:(0,1)\rightarrow(0,1)$. A variant of the model is described as follows.
Let  $x_1(n-1),\ldots,x_m(n-1)$ be the proportions of balls after step $n-1$. At step $n$,
for each edge $\{i,j\}\in E$ add one ball either to $i$ or $j$ with probability
\begin{align*}
\P{i\text{ is chosen among }\{i,j\}\text{ at step }n}=\frac{f_{\{i,j\} }(x_i(n-1))}{f_{\{i,j\} }(x_i(n-1))+f_{\{i,j\} }(x_j(n-1))}\,\cdot
\end{align*}
In other words, we replace the law of $\delta_{i\leftarrow j}(n)$ in (\ref{transition probability 2})
by the above one, defined in terms of the $f_{\{i,j\}}$'s.

If we assume that, for each $\{i,j\}\in E$, $ f_{\{i,j\}}(x)=x^{\alpha{\{i,j\}}}$ for some $\alpha{\{i,j\}}>0$,
then
\begin{align*}
L(v_1,\ldots,v_m)=-\sum_{i=1}^m v_i+\dfrac{1}{N}\sum_{\{i,j\}\in E}\frac{\log{\left(v_i^{\alpha\{i,j\}}+v_j^{\alpha\{i,j\}}\right)}}{\alpha{\{i,j\}}}
\end{align*}
is a strict Lyapunov function for this variant model.

If furthermore each $\alpha{\{i,j\}}<1$, then we can argue as in the proof of
Theorem~\ref{main thm alpha<1} and conclude that there exists
$v$ (depending on $G$ and $\alpha\{i,j\},\{i,j\}\in E$) with non-zero entries
such that $x(n)$ converges to $v$ almost surely.

\subsection{Hypergraph based interactions}

We can similarly define a variant of the model on hypergraphs. Let $G=(V,E)$ be an hypergraph,
where $V=[m]$ and $E\subset 2^V$, $|E|=N$. Let  $x_1(n-1),\ldots,x_m(n-1)$ be the proportions
of balls after step $n-1$. At step $n$, for each hyperedge $e\in E$ add one ball to one of its
vertices with probability
\begin{align*}
\P{i\text{ is chosen on hyperedge }e\text{ at step }n}=\dfrac{x_i(n-1)}{\displaystyle\sum_{j\in e}x_j(n-1)}\,\cdot
\end{align*}
Notice that when $G$ is the trivial hypergraph with only one hyperedge $[m]$, this variant is a P\'olya's urn
model with balls of $m$ colors. See for instance \S 4.2 of \cite{pemantle2007survey}.

A special case of this variant was considered in \cite{skyrms2000dynamic}.
The authors defined\footnote{Actually, they only considered the model on complete graphs.}
a model called ``Friends II'' in a graph $\widetilde G=(\{1,\ldots,m\},\widetilde E)$.
If we define a hypergraph $G=(V,E)$ whose vertices are the edges of $\widetilde G$ and whose
hyperedges are the sets of neighboring edges in $\widetilde G$, i.e.
$V=\widetilde E$ and $E=\{e_1,\ldots,e_m\}$ with $e_i=\{\{i,j\}\in\widetilde E\}$,
then ``Friends II'' in $\widetilde G$ is the same as our variant in the hypergraph $G$.

The autonomous ODE of this variant is
\begin{eqnarray}\label{definition ODE for hypergraphs}
\left\{
\begin{array}{rcl}
\dfrac{dv_1(t)}{dt}&=&-v_1(t)+\dfrac{1}{N} \displaystyle\sum_{e\in E\atop{1\in e}}\frac{v_1(t)}{\sum_{j\in e}v_j(t)}\\
&\vdots&\\
\dfrac{dv_m(t)}{dt}&=&-v_m(t)+\dfrac{1}{N} \displaystyle\sum_{e\in E\atop{m\in e}}\frac{v_m(t)}{\sum_{j\in e}v_j(t)}\,\cdot\\
\end{array}\right.
\end{eqnarray}
In this case,
\begin{align*}
L(v_1,\ldots,v_m)=-\sum_{i=1}^m v_i+\dfrac{1}{ N}\sum_{e\in E} \log\left({\sum_{i\in e}v_i}\right)
\end{align*}
is a strict Lyapunov function.

In addition to the above two variants, one can also consider a model in which the
number of balls added at each step is some process possibly depending on the outcome so far.

\section{The case $\alpha=1$ for regular bipartite graphs}\label{section eigenvalues bipartite}

This section is of independent interest, and its purpose is to provide a better understanding of
the vector field $F$ when $G$ is a finite, $r$-regular, bipartite, connected graph, and $\alpha=1$.
We prove that every $v$ in the interior of $\Omega$ is stable in any direction transverse to $\Omega$,
by looking at the jacobian matrix
\begin{align}\label{definition jacobian}
JF=\left[
\begin{array}{ccc}
\dfrac{\partial F_1}{\partial v_1}&\cdots& \dfrac{\partial F_1}{\partial v_m}\\
\vdots&\ddots &\vdots\\
\dfrac{\partial F_m}{\partial v_1}&\cdots& \dfrac{\partial F_m}{\partial v_m}
\end{array}
\right]
\end{align}
of the vector field $F=(F_1,\ldots,F_m)$ defined by (\ref{definition ODE}).
Because $v\in\Omega$ belongs to a line of singularities, $0$ is an eigenvalue of $JF(v)$.
We prove that

\begin{lemma}\label{lemma hyperbolic attractor}
Let $v\in {\rm int}(\Omega)$. Any eigenvalue of $JF(v)$ different from $0$ has negative real part,
and $0$ is a simple eigenvalue of $JF(v)$.
\end{lemma}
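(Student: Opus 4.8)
The plan is to compute the Jacobian $JF(v)$ at a point $v\in\mathrm{int}(\Omega)$, express it as a positive diagonal matrix times a symmetric negative semidefinite matrix, and read off the spectrum after a symmetrizing conjugation.

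First I would exploit that every point of $\mathrm{int}(\Omega)$ is an equilibrium with strictly positive coordinates, so $\partial L/\partial v_i(v)=0$ for all $i$. Feeding this into the entries of $\partial F_i/\partial v_j$ already recorded in the excerpt kills the ``$\partial L/\partial v_i$'' contribution to the diagonal, leaving $JF(v)=D\,H$, where $D=\mathrm{diag}(v_1,\dots,v_m)$ and $H$ is the Hessian of $L$ at $v$.

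Next I would analyze $H$ through its quadratic form. A summation-by-edges computation gives, for every $x\in\R^m$,
\[
x^{\top}Hx=-\frac{1}{N}\sum_{\{i,j\}\in E}\frac{(x_i+x_j)^2}{(v_i+v_j)^2},
\]
so $H$ is negative semidefinite. On $\mathrm{int}(\Omega)$ one has $v_i=p$ on $A$ and $v_i=q$ on $B$ with $p+q=2/m$, and bipartiteness places the two endpoints of each edge in opposite classes, so $v_i+v_j=2/m$ across every edge. The common denominator factors out and yields $H=-\beta\,Q$, where $\beta=m^2/(4N)>0$ and $Q=rI+\A$ is the signless Laplacian ($\A$ the adjacency matrix). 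Moreover $\ker H=\{x:x_i+x_j=0\ \forall\{i,j\}\in E\}$; along any path the entries alternate in sign with constant modulus, so by connectedness and bipartiteness this kernel is the single line spanned by $\mathbf 1_A-\mathbf 1_B$.

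Finally, since all $v_i>0$ on $\mathrm{int}(\Omega)$, the matrix $D^{1/2}$ is invertible and
\[
D^{-1/2}\,JF(v)\,D^{1/2}=D^{1/2}H\,D^{1/2}
\]
is symmetric and negative semidefinite, with kernel $D^{-1/2}\ker H$ again one-dimensional. Hence $JF(v)$, being similar to it, has real nonpositive eigenvalues, a simple eigenvalue $0$, and all remaining eigenvalues strictly negative; in particular every nonzero eigenvalue has negative real part, which is the assertion. The only delicate points are the bookkeeping that produces the edge-sum quadratic form and the verification that $\ker Q$ is exactly one-dimensional; I do not expect a substantive obstacle beyond recognizing the signless-Laplacian structure, after which the symmetrization makes the spectral conclusion automatic.
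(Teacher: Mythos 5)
Your proposal is correct, and it takes a genuinely different route from the paper. The paper computes the entries of $JF(v)$ explicitly, writes $JF(v)=-I+\frac{1}{r}S$ with $S$ a non-symmetric $2\times 2$ block matrix built from the bipartite adjacency matrix $M$, and then argues via the Schur complement of $S-\lambda I$ that every eigenvalue is either real or has real part exactly $r/2$, bounds the real eigenvalues by $r$ with a maximum-entry argument, and proves simplicity of the eigenvalue $r$ by showing the corresponding eigenvector induces a harmonic function on $G$ and invoking the maximum principle. Your argument instead exploits the gradient-like structure $F_i=v_i\,\partial L/\partial v_i$: at an interior equilibrium the first-order terms vanish, so $JF(v)=DH$ with $D={\rm diag}(v_1,\ldots,v_m)$ positive and $H$ the Hessian of $L$, and the conjugation by $D^{1/2}$ reduces everything to the symmetric negative semidefinite matrix $D^{1/2}HD^{1/2}$; the edge-sum quadratic form and the signless-Laplacian identification of the kernel for a connected bipartite graph finish the job. (This is the same symmetrization the paper itself uses in Section~\ref{section lemma unstable equilibrium} to discuss the block $B$ of the Jacobian, but the paper does not deploy it here.) Your route buys two things: it yields the strictly stronger conclusion that all eigenvalues of $JF(v)$ are real (so the ``real part $r/2$'' branch of the paper's case analysis is in fact vacuous), and the decomposition $JF=DH$ with $H\preceq 0$ applies verbatim to any interior equilibrium of any graph, with only the kernel computation being specific to $\Omega$. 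What the paper's computation buys in exchange is explicit spectral information: the nonzero eigenvalues are expressed through the spectrum of $MM^t$, i.e.\ the singular values of the bipartite adjacency structure, which is relevant to the rate-of-convergence question raised in Problem~\ref{problem 3}. The only points to make sure you write out carefully are the identity $x^{\top}Hx=-\frac{1}{N}\sum_{\{i,j\}\in E}(x_i+x_j)^2/(v_i+v_j)^2$ and the fact that $\{x:x_i+x_j=0\ \forall\{i,j\}\in E\}$ is one-dimensional for a connected bipartite graph; both are straightforward.
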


\begin{proof}
We explicitly calculate the entries $\partial F_i/\partial v_k$.
Let $v\in\Omega$ with $v_i=p$ for $i\in A$ and $v_i=q$ for $i\in B$.
We have five cases:\\

\noindent $\bullet$ $i=k\in A$:
\begin{align*}
\dfrac{\partial F_i}{\partial v_i}(v)=-1+\dfrac{1}{N}\sum_{j\sim i}\dfrac{v_j}{(v_i+v_j)^2}=-1+\dfrac{mq}{2}\cdot
\end{align*}

\noindent $\bullet$ $i=k\in B$: analogously,
\begin{align*}
\dfrac{\partial F_i}{\partial v_i}(v)=-1+\dfrac{mp}{2}\cdot
\end{align*}

\noindent $\bullet$ $i\sim k$ and $i\in A$:
\begin{align*}
\dfrac{\partial F_i}{\partial v_k}(v)=-\dfrac{v_i}{N(v_i+v_k)^2}=-\dfrac{mp}{2r}\cdot
\end{align*}

\noindent $\bullet$ $i\sim k$ and $i\in B$: analogously,
\begin{align*}
\dfrac{\partial F_i}{\partial v_k}(v)=-\dfrac{mq}{2r}\cdot
\end{align*}

\noindent $\bullet$ $i\not\sim k$: in this case, $\partial F_i/\partial v_k=0$.\\

If we label the vertices of $A$ from $1$ to $m/2$, the vertices of $B$ from $1$ to $m/2$, and if we
let $M=(m_{ij})$ be the $m/2\times m/2$ adjacency matrix of the edges connecting vertices of $A$ to
vertices of $B$ (i.e. $m_{ij}=1$ when the $i$-th vertex of $A$ is adjacent to the $j$-th vertex of $B$),
then $JF(v)$ takes the form
\begin{eqnarray*}
JF(v)=-I+\dfrac{m}{2r}
\left[
\begin{array}{ccc}
rqI&&-pM\\
&&\\
-qM^t&&rpI\\
\end{array}
\right]\cdot
\end{eqnarray*}
Letting $\mu=p/(p+q)$ and $\nu=q/(p+q)$, $JF(v)$ can be written as
\begin{eqnarray}\label{description jacobian matrix}
JF(v)=-I+\dfrac{1}{r}\left[
\begin{array}{ccc}
r\nu I&&-\mu M\\
&&\\
-\nu M^t&&r\mu I\\
\end{array}
\right]=:-I+\dfrac{1}{r}S.
\end{eqnarray}
Given a matrix $X$, let $\sigma(X)$ denote its spectrum. By (\ref{description jacobian matrix}),
\begin{align*}
\sigma(JF(v))=\dfrac{1}{r}\sigma(S)-1
\end{align*}
and so it is enough to estimate the set $\sigma(S)$. The lemma will follow once we prove that
\begin{enumerate}[(a)]
\item every element of $\sigma(S)$ is either real or has real part equal to $r/2$,
\item $r$ is the largest real eigenvalue of $S$, and
\item $r$ is a simple eigenvalue of $S$.
\end{enumerate}

Let's prove (a). Let $\lambda=a+bi\in \sigma(S)$. Because $r\mu,r\nu<r$,
we can assume that $\lambda\not=r\mu,r\nu$. Note that the matrix
\begin{eqnarray*}
S-\lambda I=\left[
\begin{array}{ccc}
(r\nu-\lambda)I&&-\mu M\\
&&\\
-\nu M^t&&(r\mu-\lambda)I\\
\end{array}
\right]
\end{eqnarray*}
is singular if and only if its Schur complement
\begin{align*}
(r\nu-\lambda)I-(-\mu M)(r\mu-\lambda)^{-1}I(-\nu M^t)=
\dfrac{\mu\nu }{r\mu-\lambda}\left[\dfrac{(r\mu-\lambda)(r\nu-\lambda)}{\mu\nu}I-MM^t\right]
\end{align*}
is singular. Because $MM^t$ is symmetric, all of its eigenvalues are real and so
\begin{align*}
\dfrac{(r\mu-\lambda)(r\nu -\lambda)}{\mu\nu}\in\R \Longrightarrow (r\mu-\lambda)(r\nu-\lambda)\in\R.
\end{align*}
Calculating the imaginary part of $(r\mu-\lambda)(r\nu-\lambda)$, it follows that
\begin{align*}
-rb+2ab=0 \Longrightarrow b=0 \text{ or } a=r/2,
\end{align*}
which proves (a).

To prove (b), let $\lambda\in\sigma(S)\cap\mathbb R$, say $Sx=\lambda x$, where
$x=(x_1,\ldots,x_m)\in\R^m\backslash\{0\}$. Letting
$x_i=\max\{|x_1|,\ldots,|x_m|\}$ for $i\in A$, we have
\begin{align*}
\lambda x_i=r\nu x_i-\mu\sum_{j\sim i}x_j\le r\nu x_i+\mu\sum_{j\sim i}x_i=rx_i
\end{align*}
and thus $\lambda\le r$. The same holds if $i\in B$.

It remains to prove (c). When $\lambda=r$, we have for $i\in A$
\begin{eqnarray*}
rx_i=r\nu x_i-\mu\sum_{j\sim i}x_j \Longrightarrow x_i=-\dfrac{1}{r}\sum_{j\sim i}x_j
\end{eqnarray*}
and similarly for $i\in B$. Thus the function $h:V\rightarrow\R$ defined by
\begin{eqnarray*}
h(i)&=&\left\{\begin{array}{rl}
x_i&\text{if }i\in A,\\
-x_i&\text{if }i\in B\\
\end{array}\right.
\end{eqnarray*}
is harmonic in $G$. By the maximum principle, $h$ is constant, and so $r$ is a simple eigenvalue of $S$.
\end{proof}

\section{Further questions}\label{section questions}

This work is part of a program to answer the following

\begin{problem}
Given a finite connected graph $G$ and $\alpha>0$, what is the limiting behavior of $x(n)$?
\end{problem}

Theorem \ref{main thm alpha<1} gives a full answer when $\alpha<1$. When $\alpha=1$, it is not
clear what to expect, because the properties of the graph should be taken into account.
We conjecture the following.\\

\noindent {\bf Conjecture.} Let $G$ be a finite, connected, not balanced bipartite graph,
and let $\alpha=1$. Then there is a single point such that $x(n)$ converges to it almost surely.\\

When $\alpha>1$ the question remains widely open, even when $G$ is a triangle. The uniform measure is always
an equilibrium. When $1<\alpha<4/3$, it is stable and thus $x(n)$
converges to it with positive probability. Also, by Remark \ref{remark independent set},
for any $i \in \{1,2,3\}$, $x_i(n)$ converges to zero with positive probability.
In general, we think there exists $\alpha_0=\alpha_0(G)$ such that when $\alpha>\alpha_0$
\begin{align*}
\P{\lim_{n\to\infty}x(n)\in\partial{\Delta}}=1.
\end{align*}
When $G$ is the star graph, item (b) of Theorem~\ref{main thm star graph} gives a partial answer
to the question.

Now turn attention to the special cases we considered, summarized in Table \ref{table special cases}.

\begin{table}[ht]
\renewcommand{\arraystretch}{1.5}
\begin{tabular}{|c|c|c|c|}\hline
$G$ & {\bf regular non-bipartite} & {\bf regular bipartite} & {\bf star graph}\\ \hline\hline
$\alpha<1$ & \multirow{2}{*}{uniform measure}& uniform measure & \multirow{2}{*}{$v(m,\alpha)$ of Theorem \ref{main thm star graph}}\\ \cline{1-1}\cline{3-3}
$\alpha=1$ & & $\Omega$ & \\ \hline
&&& probability $>0$ to \\
$\alpha>1$ & ? & ? & $(0,\ldots,0,1)$ and\\
&&& $\left(\frac{1}{m-1},\ldots,\frac{1}{m-1},0\right)$\\ \hline
\end{tabular}
\caption{The limiting behavior of $x(n)$.}\label{table special cases}
\end{table}

When $G$ is regular bipartite and $\alpha=1$, Theorem~\ref{main thm alpha=1} says
that the limit set of $x(n)$ is contained in $\Omega$. However, we do not know if
the limit exists. When the number of vertices is two, the model is the classical
P\'{o}lya's urn, and in this case it is known that $x(n)$ converges to a
point of $\Omega$ almost surely. See e.g.~\S 2.1 of \cite{pemantle2007survey}.

\begin{problem}
For a general regular bipartite graph and $\alpha=1$, does $x(n)$ converge
to a point of $\Omega$ almost surely?
\end{problem}

In Section \ref{section eigenvalues bipartite} we proved that every point in
the interior of $\Omega$ is stable in any direction transverse to $\Omega$.

\begin{problem}\label{problem 3}
In Theorems \ref{main thm alpha=1}, \ref{main thm general graphs} and \ref{main thm alpha<1},
what is the rate of convergence of $x(n)$ to its limit?
\end{problem}

This problem is related to the control of the eigenvalues of $JF$ on $\Lambda$, and
of quantitative estimates on the precision that $\{x(n)\}_{n\ge 0}$ shadows a real orbit of the ODE
associated to $F$. See e.g. \S 3.2 of \cite{pemantle2007survey}.

Another question of interest is the following

\begin{problem}
What is the correlation between the number of balls in the bins, as a function of $\alpha$ and of the
number of steps $n$, e.g. when $G$ is an Euclidean lattice?
\end{problem}

\begin{remark}
After the preparation of this manuscript, we learned that the Conjecture and Problem 11.2 were solved affirmatively
\cite{chen2013generalized}.
\end{remark}

\section{Acknowledgements}

The authors are thankful to Omri Sarig for valuable comments and suggestions.
M.B. is supported by Swiss National Foundation, grant 138242.
I.B. is the incumbent of the Renee and Jay Weiss Professorial Chair. J.C. is supported by the ISF.
During the preparation of this manuscript, Y.L. was a Postdoctoral Fellow at the Weizmann Institute of
Science, supported by the ERC, grant 239885. Y.L. is supported by the Brin Fellowship.

\bibliography{polya_urn_bib}

\end{document}